\theoremstyle{plain}
\newtheorem{theorem}{Theorem}[section]
\newtheorem*{theorem*}{Theorem}
\newtheorem{proposition}[theorem]{Proposition}
\newtheorem{lemma}[theorem]{Lemma}
\newtheorem{corollary}[theorem]{Corollary}
\newtheorem{remark}[theorem]{Remark}
\newtheorem{example}[theorem]{Example}
\begin{document}

\journal{(internal report CC25-15)}

\begin{frontmatter}

\title{Type II success runs of Bernoulli trials separated by a gap}

\author[usc]{S.~J.~Dilworth}
\ead{dilworth@math.sc.edu}
\address[usc]{Department of Mathematics, University of South Carolina, Columbia, SC 29208, USA}

\author[cc]{S.~R.~Mane}
\ead{srmane001@gmail.com}
\address[cc]{Convergent Computing Inc., P.~O.~Box 561, Shoreham, NY 11786, USA}

\begin{abstract}
We treat success runs of independent identically distributed Bernoulli trials (with success parameter $p$)
distributed according to the Type II binomial distribution of order $k$.
However, the success runs are separated by a gap $g\ge1$ (a failure followed by $g-1$ arbitrary outcomes).
Most of the literature treats the case $g=1$ only.
Our main results are expressions for the probability mass function (we present two derivations)
and the distribution of the longest success run.
We also present more concise expressions for previously published results for the factorial moments.
We present results for the mean, variance, probability mass function and factorial moments for
$\textrm{NB}_{\rm II}(k,g,r)$, the Type II negative binomial distribution of order $k$,
where the number of success runs $r$ is fixed and the number of trials $n$ is variable.
Let $L$ denote the length of the longest success run.
We present a recurrence and generating function for the distribution of $L$
and derive expressions for the mean, variance and factorial moments of $L$.
\end{abstract}

\begin{keyword}
Binomial distribution of order $k$
\sep distribution of longest run
\sep probability mass
\sep factorial moments

\MSC[2020]{
60E05  
\sep 11B37  
}

\end{keyword}

\end{frontmatter}

\newpage
\setcounter{equation}{0}
\section{Introduction}\label{sec:intro}
The Type II binomial distribution of order $k$ is characterized by a fixed number $n$ of independent identically distributed Bernoulli trials (with success parameter $p$)
where a success run is a sequence of at least $k$ uninterrupted successes, and distinct success runs are separated by at least one failure.
(For this reason, this is also called the ``at least'' counting scheme, see e.g.~Antzoulakos and Chadjikonstantinidis \cite{AntzoulakosChadjikonstantinidis2001}.)
An important reference is the monograph on success runs by Balakrishnan and Koutras \cite{BalakrishnanKoutras}.
Significant results were published by de Moivre \cite{deMoivre} and Muselli \cite{Muselli}, and we shall make contact with their results below.
Hald \cite{Hald} and Malinovsky \cite{Malinovsky} present more recent and accessible references to de Moivre's work.

Han and Aki \cite{HanAki2000} pointed out that repairs to a system are not always performed instantaneously after a failure;
there can be a delay, and this can be modeled by a gap between success runs.
It is possible that regulations require a safety inspection, etc., and this can also be modeled by a gap between success runs.
Dilworth and Mane \cite{DM5} (hereafter `DM') studied the Type II binomial distribution of order $k$,
where the success runs were separated by a gap of $g\ge1$ trials, where the gap begins with a failure and is followed by $g-1$ arbitrary outcomes.
DM published expressions for the double pgf and the factorial moments, etc.
Here we present additional results on the subject.
Our main results are the probability mass function (two derivations, in Secs.~\ref{sec:pmf1} and \ref{sec:pmf2})
and the distribution of the longest success run (Sec.~\ref{sec:longestrun}).

To clarify the concepts below, we quantify the notion of a gap more precisely.
The term `order $k$' means a success run has length at least $k$.
The gap applies only to strings of consecutive successes with length $t$, where $t \le n$. 
We can define the success runs (of order $k$ with gap $g$) inductively as follows.
The first sucess run  (if it exists) is the first  run of at least $k$ successes
which cannot be extened to a longer run.
We say that a success run which includes trial $m \le n$  terminates at trial $m$ if either $m=n$ or  trial $m+1$ is a failure.
Note that in this case  trial $m$ is the last success trial in the run.
Suppose  that $j \ge 1$ and that the $j^{th}$ success run has been defined inductively  and terminates at trial $m$.
Then the $(j+1)^{th}$ success run  (if it exists) is the first run of at least $k$ successes
which begins later than  trial $m+g$ and which cannot be extended to a longer run (beginning later than trial $m+g$).
Either there are no success runs or there exists a largest value of $r \ge 1$ such that there is a unique  $j^{th}$ success run for each $1 \le j \le r$.

\begin{proposition}\label{prop:max_numruns}
For fixed $n$, $k$ and $g$ the maximum number of success runs is $\lfloor (n+g)/(k+g) \rfloor$.
\end{proposition}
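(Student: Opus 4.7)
The plan is a standard extremal-counting argument split into two halves: establishing the upper bound on $r$ by summing the minimum lengths dictated by the inductive definition, and then exhibiting an explicit configuration attaining the bound.

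First I would argue the upper bound. Suppose there exist $r$ success runs, and for $1 \le j \le r$ let run $j$ occupy trials $s_j, s_j+1, \ldots, e_j$. By the defining property, $e_j - s_j + 1 \ge k$. By the inductive definition of the $(j+1)^{th}$ run, $s_{j+1} > e_j + g$, i.e.\ $s_{j+1} \ge e_j + g + 1$. Telescoping these inequalities yields
\begin{equation*}
e_r \ge s_1 + (r-1)(k+g) + (k-1) = s_1 + r(k+g) - g - 1 \ge 1 + r(k+g) - g - 1,
\end{equation*}
and since $e_r \le n$ we obtain $r(k+g) \le n+g$, i.e.\ $r \le \lfloor (n+g)/(k+g) \rfloor$.

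Next I would show the bound is attained. Set $r = \lfloor (n+g)/(k+g) \rfloor$ and construct the sequence in which trials $1,\ldots,k$ are all successes, trials $k+1,\ldots,k+g$ are all failures, trials $k+g+1,\ldots,2k+g$ are all successes, and so on, alternating a block of $k$ successes with a block of $g$ failures. This uses $rk + (r-1)g = r(k+g) - g$ trials for the runs and their separating gaps, which is $\le n$ by the definition of $r$; any remaining trials from position $rk + (r-1)g + 1$ through $n$ are filled with failures. By construction the resulting sequence contains exactly $r$ maximal blocks of $\ge k$ consecutive successes, each separated from the next by a failure followed by $g-1$ further trials, so the inductive definition produces precisely $r$ success runs.

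The only subtle step is the telescoping bound, and here the main thing to watch is whether the gap condition $s_{j+1} > e_j + g$ (strict, with $s_{j+1}$, $e_j$ integers) gives $s_{j+1} - e_j \ge g+1$ as used above; this follows directly from the inductive definition in the excerpt. Combining the two halves completes the proof.
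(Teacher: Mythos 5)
Your proof is correct and follows essentially the same counting argument as the paper: $r$ runs of length at least $k$ separated by gaps of length at least $g$ force $kr+(r-1)g\le n$, hence $r\le\lfloor(n+g)/(k+g)\rfloor$, with attainment by the all-minimal configuration. Your version merely makes explicit two points the paper's terse proof leaves implicit, namely the telescoping derivation of the inequality from the inductive definition (via $s_{j+1}\ge e_j+g+1$) and the verification that the extremal configuration really produces $r$ runs.
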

\begin{proof}
The maximum number of success runs is attained when every success run has length exactly $k$ (the shortest possible),
the gap length between consecutive success runs is exactly $g$ (the shortest possible).
Suppose there are $r$ success runs, then they are separated by $r-1$ gaps, hence we must have $kr+(r-1)g \le n$, i.e. ~$r \le \lfloor (n+g)/(k+g) \rfloor$
and the result follows.
\end{proof}
\begin{remark}
Antzoulakos and Chadjikonstantinidis (\cite{AntzoulakosChadjikonstantinidis2001}, Theorem 3.5)
and
Muselli (\cite{Muselli}, Theorem 1)
both treated the case $g=1$ and obtained the upper limit $\lfloor (n+1)/(k+1) \rfloor$.
We generalize their results.
\end{remark}

We shall derive a result below for $P(L \le t)$, the probability that the length $L$ of the longest success run does not exceed $t$.
The result depends on both $k$ and $t$, and one must make a distinction between them.
Such a distinction between strings of consecutive successes of different lengths does not apply if $g=1$, because all such strings are separated by a single failure.
This is the case treated in most of the literature, e.g.~by Muselli and de Moivre, etc.

The structure of this paper is as follows.
In Sec.~\ref{sec:pmf1}, we employ the double pgf (probability generating function)
to derive an expression for the probability mass function to attain $r$ success runs, for fixed $(k,g,n)$.
In Sec.~\ref{sec:facmom}, we revisit the results for the factorial moments derived in \cite{DM5}.
We employ the insights in this note to elucidate the structure of the factorial moments and rewrite them in a more concise and illuminating manner.
Let $L$ denote the length of the longest success run.
Sec.~\ref{sec:longestrun1} presents the derivation for $P(L \le t)$, for fixed $(k,g,n)$.
Sec.~\ref{sec:pmf2} presents an alternative derivation of the probability mass function, using the inclusion-exclusion pinciple.
Sec.~\ref{sec:negbinom} presents a few remarks on $\textrm{NB}_{\rm II}(k,g,r)$, the Type II negative binomial distribution of order $k$,
where the number of success runs $r$ is fixed and the number of trials $n$ is variable.
Sec.~\ref{sec:longestrun2} presents a recurrence and generating function for $P(L \ge t)$, 
and expressions for the mean, variance and factorial moments of $L$.
Sec.~\ref{sec:conc} concludes.

\setcounter{equation}{0}
\section{Probability mass function (pmf) I}\label{sec:pmf1}
We present the single and double pgf for the number of success runs.
We refer the reader to the text by Balakrishnan and Koutras \cite{BalakrishnanKoutras}
for details on the theory of probability generating functions for success runs.
For brevity, we omit mention of $k$ and $g$ in the indexing and 
let $p_{n,r}$ denote the probability of $r$ success runs in fixed $n$ trials.
DM denoted the single pgf via 
$\phi_n(t) = \sum_{j=0}^\infty p_{n,j}t^j$
and the double pgf as 
$\Phi(z,t) = \sum_{n=0}^\infty z^n\phi_n(t)$
(\cite{DM5}, eqs.~(2.1) and (2.2), respectively).
DM derived the following expression for $\Phi(z,t)$ (\cite{DM5}, eq.~(2.6))
\begin{equation}
\label{eq:doublepgf}
\Phi(t,z) = \frac{(1-z)(1+p^k(t-1)z^{k})+p^kqtz^{k+1}(1-z^{g-1})}{(1-z)(1-z+p^kqz^{k+1}(1-tz^{g-1}))} \,.
\end{equation} 
For $g=1$, eq.~\eqref{eq:doublepgf} simplifies to eq.~(5.11) of Balakrishnan and Koutras \cite{BalakrishnanKoutras}.
DM processed eq.~\eqref{eq:doublepgf} to obtain the factorial moments.
The generator of the factorial moments is obtained by evaluating $\partial^r\Phi(z,t)/\partial t^r$ at $t=1$.
The generator of the pmf is obtained by evaluating $\partial^r\Phi(z,t)/\partial t^r$ at $t=0$.
As noted in \cite{DM5}, $\Phi(t,z)$ is a rational function of linear polynomials in $t$.
Then, with obvious expressions for $A$, $B$, $C$ and $D$, 
\begin{equation}
\begin{split}
\Phi(t,z) = \frac{A+Bt}{C+Dt} 
&= \frac{AD-BC}{D}\,\frac{1}{C+Dt} +\frac{B}{D} 
\\
&= \frac{1 -pz}{qz^g(1-z)}\frac{1-z +qp^kz^{k+1} -qp^kz^{k+g}}{1-z +qp^kz^{k+1} -qp^kz^{k+g}t} 
-\frac{1 -pz -qz^g}{qz^g(1-z)} \,.
\end{split}
\end{equation}
This can easily be differentiated partially $r$ times with respect to $t$.
\begin{equation}
\label{eq:doublepgf_abcd_diff_t}
\frac{\partial^r \Phi(t,z)}{\partial t^r}
= r!\,\frac{1 -pz}{qz^g(1-z)}\frac{(1-z +qp^kz^{k+1} -qp^kz^{k+g}) q^rp^{rk}z^{r(k+g)}}{(1-z +qp^kz^{k+1} -qp^kz^{k+g}t)^{r+1}} \,.
\end{equation}
Divide by $r!$ and set $t=0$ to obtain the generator of the pmf, say $P^{(r)} = \sum_{n=0}^\infty z^n p_{n,r}$.
We omit the details of the algebra.
The solution for $p_{n,r}$ is
\begin{equation}
\label{eq:p_nr_Muselli_ish_alt}  
\begin{split}
  p_{n,r} &= \sum_{m=r}^{\left\lfloor \frac{n+1 -(r-1)(g-1)}{k+1} \right\rfloor} (-1)^{m-r} q^{m-1}p^{mk} \binom{m-1}{r-1} \quad\times
  \\
  &\qquad\qquad\qquad\qquad
  \biggl[\binom{n-mk+1 -(r-1)(g-1)}{m} -p\binom{n-mk -(r-1)(g-1)}{m}\biggr]
  \\
  &\quad
  -\sum_{m=r+1}^{\left\lfloor \frac{n+1 -r(g-1)}{k+1} \right\rfloor} (-1)^{m-r-1} q^{m-1}p^{mk} \binom{m-1}{r} \quad\times
  \\
  &\qquad\qquad\qquad\qquad
  \biggl[ \binom{n-mk+1 -r(g-1)}{m} -p\binom{n-mk -r(g-1)}{m}\biggr] \,.
\end{split}
\end{equation}
\begin{corollary}
  Define $\nu = n-(r-1)(g-1)$ for brevity. 
  Let $N_s$ denote the number of success runs, then with an obvious notation
\begin{equation}
\label{eq:p_N_gt_r}
\begin{split}
  P(N_s \ge r) &= \sum_{m=r}^{\left\lfloor \frac{\nu+1}{k+1} \right\rfloor} (-1)^{m-r} q^{m-1}p^{mk} \binom{m-1}{r-1} 
  \biggl[\binom{\nu-mk+1}{m} -p\binom{\nu-mk}{m}\biggr] \,.
\end{split}
\end{equation}
Observe that $P(N_s \ge r)$ depends on $n$ and $g$ only in the combination $n-(r-1)(g-1)$.
\end{corollary}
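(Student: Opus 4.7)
The plan is to recognize a telescoping structure hidden in the pmf formula \eqref{eq:p_nr_Muselli_ish_alt} and then sum it from $j=r$ upward. Define
\begin{equation*}
F_r(\nu) \,:=\, \sum_{m=r}^{\lfloor(\nu+1)/(k+1)\rfloor} (-1)^{m-r}\, q^{m-1}p^{mk}\binom{m-1}{r-1} \left[\binom{\nu-mk+1}{m} - p\binom{\nu-mk}{m}\right],
\end{equation*}
so that the claim becomes simply $P(N_s \ge r) = F_r(\nu)$ with $\nu = n-(r-1)(g-1)$.

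The first step is to observe that the two sums in \eqref{eq:p_nr_Muselli_ish_alt} are instances of $F$ evaluated at shifted arguments. Set $\nu_j := n - (j-1)(g-1)$. Matching summation limits, binomial coefficients, and signs---in particular $(-1)^{m-r-1} = (-1)^{m-(r+1)}$, $\binom{m-1}{r} = \binom{m-1}{(r+1)-1}$, and the offset $r(g-1) = ((r+1)-1)(g-1)$ that appears in the second sum---one sees that the first sum equals $F_r(\nu_r)$ and the second equals $F_{r+1}(\nu_{r+1})$. Thus
\begin{equation*}
p_{n,r} \,=\, F_r(\nu_r) - F_{r+1}(\nu_{r+1}).
\end{equation*}

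Summing over $j \ge r$ now telescopes: $P(N_s \ge r) = \sum_{j=r}^{\infty} p_{n,j} = F_r(\nu_r) - \lim_{j \to \infty} F_j(\nu_j)$, provided the tail vanishes. The conditions $m \ge j$ and $m \le (\nu_j+1)/(k+1)$ that delimit the sum defining $F_j(\nu_j)$ together require $j(k+g) \le n+g$, i.e.\ $j \le \lfloor (n+g)/(k+g) \rfloor$, which is exactly the bound of Proposition~\ref{prop:max_numruns}. For $j$ past this bound $F_j(\nu_j)$ is an empty sum and therefore zero, the series truncates, and we obtain $P(N_s \ge r) = F_r(\nu_r) = F_r(\nu)$, as claimed. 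The remark that $P(N_s \ge r)$ depends on $n$ and $g$ only through $\nu$ is then immediate from the right-hand side. The only delicate point in this plan is the index bookkeeping in the first step; the telescoping and the vanishing of the tail follow cleanly once the identity $p_{n,r} = F_r(\nu_r) - F_{r+1}(\nu_{r+1})$ is in hand.
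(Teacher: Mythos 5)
Your proposal is correct and takes essentially the same route as the paper: both identify the two sums in eq.~\eqref{eq:p_nr_Muselli_ish_alt} as the same expression evaluated at $r$ and $r+1$, and combine this with $p_{n,r}=P(N_s\ge r)-P(N_s\ge r+1)$. Your version is slightly more careful in that you explicitly telescope and verify the tail $F_j(\nu_j)$ vanishes for $j>\lfloor(n+g)/(k+g)\rfloor$, a boundary condition the paper leaves implicit in its ``follows immediately.''
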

\begin{proof}
Clearly, $p_{n,r} = P(N_s \ge r) - P(N_s \ge r+1)$.
Next, observe that the second sum in eq.~\eqref{eq:p_nr_Muselli_ish_alt}
is the same as the first, with the transcription $r\gets r+1$.  
This reveals that the first sum in eq.~\eqref{eq:p_nr_Muselli_ish_alt} is the \emph{probability that the number of success runs is $r$ or more},
and eq.~\eqref{eq:p_N_gt_r} follows immediately.
\end{proof}
For $g=1$, the expression in eq.~\eqref{eq:p_nr_Muselli_ish_alt} simplifies and is
\begin{equation}
\label{eq:p_nr_Muselli_ish_g1_alt}  
\begin{split}
  p_{n,r}(g=1) &= \sum_{m=r}^{\left\lfloor \frac{n+1}{k+1} \right\rfloor} (-1)^{m-r} q^{m-1}p^{mk} \binom{m}{r} \biggl[\binom{n-mk+1}{m} -p\binom{n-mk}{m}\biggr] \,.
\end{split}
\end{equation}
Employing the identity $\binom{n}{m} = \binom{n-1}{m}+\binom{n-1}{m-1}$ yields
\begin{equation}
\label{eq:p_nr_Muselli}  
\begin{split}
  p_{n,r}(g=1) &= \sum_{m=r}^{\left\lfloor \frac{n+1}{k+1} \right\rfloor} (-1)^{m-r} q^{m-1}p^{mk} \binom{m}{r} \biggl[\binom{n-mk}{m-1} +(1-p)\binom{n-mk}{m}\biggr]
  \\
  &= \sum_{m=r}^{\left\lfloor \frac{n+1}{k+1} \right\rfloor} (-1)^{m-r} q^{m-1}p^{mk} \binom{m}{r} \biggl[\binom{n-mk}{m-1} +q\binom{n-mk}{m}\biggr] \,.
\end{split}
\end{equation}
This is Muselli's result (\cite{Muselli}, Theorem 3).

\begin{corollary}
  Set $r = 1$, then $P(N_s \ge 1)$ is the probability that there is at least one success run with length at least $k$.
  This is equivalently the probability that the longest success run has length at least $k$.
  Let $L$ denote the length of the longest success run. Then, recalling $\nu=n$ if $r=1$,
\begin{equation}
\label{eq:P_L_ge_k}
\begin{split}
  P(L \ge k) = P(N_s \ge 1) &= \sum_{m=1}^{\left\lfloor \frac{n+1}{k+1} \right\rfloor} (-1)^{m-1} q^{m-1}p^{mk} \biggl[\binom{n-mk+1}{m} -p\binom{n-mk}{m}\biggr] \,.
\end{split}
\end{equation}
Observe that $P(L \ge k)$ does not depend on the value of $g$. 
\end{corollary}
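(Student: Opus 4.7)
The plan is to obtain this statement as an immediate specialization of the preceding Corollary to the case $r=1$, so the only substantive work is the probabilistic identification $P(L \ge k) = P(N_s \ge 1)$ and the observation that the resulting expression is free of $g$. For the identification, I would unfold the inductive definition of success runs from Section~\ref{sec:intro}: a success run of order $k$ with gap $g$ is, at its first occurrence, simply a maximal string of at least $k$ consecutive successes, since the clause involving $g$ (``begins later than trial $m+g$'') is vacuous until a previous run has already been defined. Consequently, the existence of at least one success run ($N_s \ge 1$) is equivalent to the presence of some string of $\ge k$ consecutive successes in the $n$ trials, which is exactly the event $\{L \ge k\}$. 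This gives the equality on the left-hand side \emph{and} shows intrinsically that the event does not depend on $g$.

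For the formula itself, I would set $r=1$ in eq.~\eqref{eq:p_N_gt_r}. The simplifications are purely mechanical: $\nu = n - (r-1)(g-1) = n$, $\binom{m-1}{r-1} = \binom{m-1}{0} = 1$, and the summation upper limit collapses to $\lfloor (n+1)/(k+1) \rfloor$. The bracketed factor becomes $\binom{n-mk+1}{m} - p \binom{n-mk}{m}$, reproducing the displayed expression term by term. The $g$-independence is then also visible algebraically: every appearance of $g$ in eq.~\eqref{eq:p_N_gt_r} is channelled through $\nu$, and $\nu$ reduces to $n$ when $r=1$.

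I do not anticipate any real obstacle. The only point that deserves care is a conceptual one: for $r \ge 2$ the event $\{L \ge k\}$ could in principle interact with $g$ through the carving-out of later runs, but for the event of interest here only the \emph{first} run is relevant, and the definitional protocol dispatches that run without ever invoking $g$. Once that observation is made, the proof consists of citing eq.~\eqref{eq:p_N_gt_r} with $r=1$.
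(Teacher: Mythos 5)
Your proposal is correct and follows the same route the paper intends: the corollary is an immediate specialization of eq.~\eqref{eq:p_N_gt_r} to $r=1$ (with $\nu=n$ and $\binom{m-1}{0}=1$), combined with the observation that $N_s\ge 1$, $L\ge k$, and the presence of a string of $\ge k$ consecutive successes are all the same event, which is manifestly independent of $g$. The paper gives no explicit proof beyond this substitution, so your write-up matches it and in fact spells out the probabilistic identification more carefully than the paper does.
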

de Moivre \cite{deMoivre} calculated $P(L \ge k)$ for the case $g=1$.
A more accessible account of de Moivre's work is given by Hald \cite{Hald},
who stated that de Moivre gave his solution using a generating function without indicating his proof,
but de Moivre gave several numerical examples, obtaining the correct answer.
Malinovsky \cite{Malinovsky} has also given an accessible account of de Moivre's solution.
In particular, if $k \ge n/2$, then only one success run of length $k$ or more is possible and we have the simple solution
(\cite{Malinovsky}, Corollary 1)
\begin{equation}
\label{eq:Mal_cor1}
  P(L \ge k) = p^k(1 + (n-k)q) \qquad (\lfloor (n/2)\rfloor \le k \le n) \,.
\end{equation}
\begin{remark}
Malinovsky derived eq.~\eqref{eq:Mal_cor1} for the case $g=1$.
However, if the value of $k$ is so large that the trials contain only one success run of length $k$ or more,
then the value of the gap $g$ is irrelevant.
Hence eq.~\eqref{eq:Mal_cor1} is valid for all gap values $g\ge1$.
In this context, $k\ge \lfloor (n/2)\rfloor$ is the tightest lower bound on the value of $k$, independent of the value of $g$.
When $g >1$, it is tempting to argue that eq.~\eqref{eq:Mal_cor1} is valid provided $2k+g > n$, i.e.~$k\ge\lfloor(n-g)/2\rfloor$,
since for such $n$ it is not possible to have two runs of $k$ successes separated by a gap of size $g$.
But consider $n = 2k+1<2k+g$.
The sequence consisting of two runs of $k$ successes separated by a single $F$ will be counted twice in
eq.~\eqref{eq:Mal_cor1} and hence eq.~\eqref{eq:Mal_cor1} overestimates $P(L  \ge k)$ if $k<\lfloor(n/2)\rfloor$.
\end{remark}
Next, Muselli calculated the probability $P(L \le k-1)$, i.e.~the longest success run has fewer than $k$ trials.
His result is  (\cite{Muselli}, Corollary 1)
\begin{equation}
\label{eq:Mus_Cor1}
\begin{split}
  P(L \le k-1) = \sum_{m=0}^{\left\lfloor \frac{n+1}{k+1} \right\rfloor} (-1)^m q^{m-1}p^{mk} \biggl[\binom{n-mk}{m-1} +q\binom{n-mk}{m}\biggr] \,.
\end{split}
\end{equation}
\begin{remark}
Clearly $P(L \ge k)$ and $P(L \le k-1)$ sum to unity.
As a self-consistency check, let us verify that the expressions for $P(L \ge k)$ in eq.~\eqref{eq:P_L_ge_k} and for $P(L \le k-1)$ in eq.~\eqref{eq:Mus_Cor1} sum to unity.
First we process eq.~\eqref{eq:Mus_Cor1} using the binomial identity $\binom{n}{m-1} = \binom{n+1}{m} - \binom{n}{m}$ to obtain
\begin{equation}
\label{eq:Mus_Cor1_modify}
\begin{split}
  P(L \le k-1) &= \sum_{m=0}^{\left\lfloor \frac{n+1}{k+1} \right\rfloor} (-1)^m q^{m-1}p^{mk} \biggl[\binom{n-mk+1}{m} +(q-1)\binom{n-mk}{m}\biggr] 
  \\
  &= \sum_{m=0}^{\left\lfloor \frac{n+1}{k+1} \right\rfloor} (-1)^m q^{m-1}p^{mk} \biggl[\binom{n-mk+1}{m} -p\binom{n-mk}{m}\biggr] \,.
\end{split}
\end{equation}
Next we add the expressions in eqs.~\eqref{eq:Mus_Cor1_modify} and \eqref{eq:P_L_ge_k}:
\begin{equation}
\begin{split}
  P(L \le k-1) +P(L \ge k) &= \sum_{m=0}^{\left\lfloor \frac{n+1}{k+1} \right\rfloor} (-1)^m q^{m-1}p^{mk} \biggl[\binom{n-mk+1}{m} -p\binom{n-mk}{m}\biggr]
  \\
  &\qquad + \sum_{m=1}^{\left\lfloor \frac{n+1}{k+1} \right\rfloor} (-1)^{m-1} q^{m-1}p^{mk} \biggl[ \binom{n-mk+1}{m} -p \binom{n-mk}{m} \biggr] 
  \\
  &= q^{-1} (1-p) + \sum_{m=1}^{\left\lfloor \frac{n+1}{k+1} \right\rfloor} (-1)^m q^{m-1}p^{mk} \biggl[\binom{n-mk+1}{m} -p\binom{n-mk}{m}\biggr]
  \\
  &\qquad - \sum_{m=1}^{\left\lfloor \frac{n+1}{k+1} \right\rfloor} (-1)^m q^{m-1}p^{mk} \biggl[ \binom{n-mk+1}{m} -p \binom{n-mk}{m} \biggr] 
  \\
  &= 1 \,.
\end{split}
\end{equation}
\end{remark}
\begin{remark}
To close this section, the authors made a comment in \cite{DM5} that 
an expression for the pgf published by Antzoulakos and Chadjikonstantinidis \cite{AntzoulakosChadjikonstantinidis2001} contained a misprint.
Upon closer examination, the expression in the unnumbered equation before eq.~(3.4) in \cite{DM5} is erroneous
and the formula in \cite{AntzoulakosChadjikonstantinidis2001} is correct.
The error is regretted.
\end{remark}

\setcounter{equation}{0}
\section{Factorial moments}\label{sec:facmom}
Observe that the expression for $p_{n,r}$ in eq.~\eqref{eq:p_nr_Muselli_ish_alt} has the structure
$p_{n,r} = f(n) - pf(n-1)$, for a suitably defined function $f(n)$.
This can be traced to the term $1-pz$ in eq.~\eqref{eq:doublepgf_abcd_diff_t}.
Expanding in powers of $z$, it follows that the coefficient of $z^n$ has the structure 
$f(n) - pf(n-1)$, for a suitably defined function $f(n)$.
Clearly, the same structure is also true for $P(N_s \ge r)$ in eq.~\eqref{eq:p_N_gt_r}.

The factorial moments are also derived from eq.~\eqref{eq:doublepgf_abcd_diff_t}, hence they also have the same structure.
One can see this in the expression for the factorial momemt $F_n^{(r)}$ in (\cite{DM5}, eq.~(3.1), but it is not self-evident.
We obtain a more elegant expression by employing $\nu=n-(r-1)(g-1)$. Then
\begin{equation}
\label{eq:DM5facmom_nu}
\begin{split}
F_n^{(r)} &= r! \biggl\{ \sum_{m=r}^{\lfloor(\nu+1)/(k+1)\rfloor}
(-1)^{m-r} q^{m-1} p^{mk} \binom{m-1}{r-1}
\sum_{s=0}^{m-r} (-1)^s \binom{m-r}{s} \binom{\nu +1 -mk -s(g-1)}{m} 
\\
&\qquad\quad
-p\,\sum_{m=r}^{\lfloor\nu/(k+1)\rfloor}
(-1)^{m-r} q^{m-1} p^{mk} \binom{m-1}{r-1}
\sum_{s=0}^{m-r} (-1)^s \binom{m-r}{s} \binom{\nu -mk -s(g-1)}{m} \biggr\} \,.
\end{split}
\end{equation}
The second line is the same as the first with $\nu$ in place of $\nu+1$ (and multiplied by $p$).
Observe that $n$ always appears in the combination $n-(r-1)(g-1)$, but $g$ also appears independently in the binomial coefficients.

We obtain a more concise expression if we raise the upper limit in the second sum to equal that in the first.
This introduces extraneous terms in the second sum, but they are all zero because the numerator is less than the denominator in the extraneous binomial coefficients.
(We lose the $f(n)-pf(n-1)$ structure, but that is not important.)
Then we obtain
\begin{equation}
\label{eq:DM5facmom_concise}
\begin{split}
F_n^{(r)} &= r! \sum_{m=r}^{\left\lfloor \frac{\nu+1}{k+1} \right\rfloor}
(-1)^{m-r} q^{m-1} p^{mk} \binom{m-1}{r-1} \;\times
\\
&\qquad\qquad
\sum_{s=0}^{m-r} (-1)^s \binom{m-r}{s} \biggl[ \binom{\nu+1-mk -s(g-1)}{m} -p\binom{\nu-mk -s(g-1)}{m} \biggr] \,.
\end{split}
\end{equation}
This has a very similar structure to eq.~\eqref{eq:p_N_gt_r}.
For $g=1$, the above expression simplifies greatly and is
\begin{equation}
F_n^{(r)}(g=1) = r! q^{r-1} p^{kr} \biggl[ \binom{n-kr+1}{r} -p\binom{n-kr}{r} \biggr] \,.
\end{equation}
This agrees with Antzoulakos and Chadjikonstantinidis (\cite{AntzoulakosChadjikonstantinidis2001}, Prop.~4.3),
as was noted already in \cite{DM5}.

\setcounter{equation}{0}
\section{Distribution of longest success run I}\label{sec:longestrun1}
  Let $L$ be the length of the longest run, for fixed $(k, g, n)$. We compute $P(L \le t)$.
  Recall that Muselli \cite{Muselli} and de Moivre \cite{deMoivre} published such results for $g = 1$,
  but as explained in the introduction, for $g > 1$ we must make a distinction between $k$ and $t$.
  For completeness, we start with a short derivation of Muselli's Theorem 2 using the inclusion-exclusion formula, expressed in our notation.
Muselli's instructive proof was less direct but more ingenious. He also obtained an even  simpler expression for $P(L \le t)$ as Corollary~1 later in his article.
\begin{theorem}\cite[Theorem 2]{Muselli}\label{thm: Mus} For $g=1$ and $t\le n$
\begin{equation} \label{eq: Mus}
P(L \le t) = \sum_{u=0}^{n} p^{n-u} q^{u}\sum_{s=0}^{\min(u+1,\lfloor (n-u)/(t+1) \rfloor)} (-1)^s  \binom{u+1}{s}\binom{n-s(t+1)}{u}.
\end{equation}
\end{theorem}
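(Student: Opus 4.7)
The plan is to condition on the total number of failures $u$ in the $n$ trials. Since $g=1$, the notion of a ``success run'' from the introduction coincides with the usual notion of a maximal uninterrupted block of successes bounded by failures or by the endpoints of the string. Every binary string of length $n$ with exactly $u$ failures has probability $p^{n-u}q^u$, so
\begin{equation*}
P(L \le t) \;=\; \sum_{u=0}^{n} p^{n-u} q^{u}\, N(u,t),
\end{equation*}
where $N(u,t)$ denotes the number of binary strings of length $n$ with exactly $u$ failures in which no maximal block of successes has length exceeding $t$.

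Next I would translate $N(u,t)$ into a bounded composition count. The $u$ failures split the string into $u+1$ ordered success blocks (possibly empty, including at both endpoints), and conversely every ordered $(u+1)$-tuple $(x_1,\dots,x_{u+1})$ of non-negative integers with $x_1+\cdots+x_{u+1}=n-u$ yields a unique such string. Thus $N(u,t)$ equals the number of such tuples satisfying the additional constraint $x_i\le t$ for every $i$.

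I would then count these bounded compositions by inclusion-exclusion on the ``bad events'' $A_i=\{x_i\ge t+1\}$. Fixing a subset of $s$ indices to be bad and substituting $y_i=x_i-(t+1)$ at those indices reduces the problem to counting non-negative solutions of $y_1+\cdots+y_{u+1}=n-u-s(t+1)$, of which there are $\binom{n-s(t+1)}{u}$ by stars and bars. Standard inclusion-exclusion then gives
\begin{equation*}
N(u,t) \;=\; \sum_{s=0}^{u+1} (-1)^s \binom{u+1}{s}\binom{n-s(t+1)}{u}.
\end{equation*}
The summand vanishes once $s(t+1)>n-u$, which justifies truncating to $s\le \lfloor(n-u)/(t+1)\rfloor$; combined with the trivial bound $s\le u+1$ this produces the upper limit $\min(u+1,\lfloor(n-u)/(t+1)\rfloor)$ in the statement. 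Substituting this expression for $N(u,t)$ into the outer sum over $u$ yields the claimed formula.

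The main obstacle is merely bookkeeping: making sure the $u+1$ success blocks are counted as non-negative (not positive) integers so that the total count with no upper bound on $x_i$ is $\binom{n}{u}$, and that the substitution $y_i=x_i-(t+1)$ is applied only to the bad indices. No generating-function machinery is required; once the reduction to bounded compositions is in place the argument is a standard bounded stars-and-bars identity.
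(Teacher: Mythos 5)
Your proposal is correct and follows essentially the same route as the paper's proof: condition on the number $u$ of failures, view the $u+1$ inter-failure slots as a composition of the $n-u$ successes, and apply inclusion-exclusion over the slots that exceed length $t$, with the shift by $t+1$ and stars-and-bars giving $\binom{n-s(t+1)}{u}$ exactly as in the paper. The only cosmetic difference is that you phrase the reduction explicitly as a bounded-composition count, whereas the paper argues directly about allocating $S$ trials to slots; the content is identical.
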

\begin{proof} Let $u$ be the exact number of $F$ trials among the first $n$ trials, so $0 \le u \le n$. The $F$ trials create $u+1$ `slots'  between them for potential success runs, including a slot before the first  and after the last $F$ trial. Consider a specific subcollection of $s$ slots, where $0 \le s \le u+1$. Suppose each slot from this subcollection contains at least $t+1$ $S$ trials.  The number of  ways in which this can happen can be counted as follows: allocate $t+1$ $S$ trials to each slot from the given  subcollection and then allocate the remaining $n - u - s(t+1)$ $S$ trials arbitrarily to all $u+1$ slots.
  So there are
$\binom{n-s(t+1)}{u}$
ways. The given subcollection can be chosen in
$ \binom{u+1}{s}$ ways.
Hence by the usual inclusion-exclusion principle for the probability of the union of events  (\cite{Feller}, p.~61)
$$ P(L \le t) = \sum_{u=0}^{n} p^{n-u} q^{u}\sum_{s=0}^{\min(u+1,\lfloor (n-u)/(t+1) \rfloor)} (-1)^s  \binom{u+1}{s}\binom{n-s(t+1)}{u}.$$
Note that the upper limit on the sum over $s$   ensures that all the binomial coefficients are nonzero.
\end{proof}

The proof above does not generalize in a straightforward manner to the case $g>1$. Indeed, consider the following seemingly paradoxical example which illustrates the complications which arise for $g>1$.
\begin{example} For $k=2$ and $g=3$, consider the following possible  realization of $n=11$ trials:
$$ A = SSFSSSFSFSS, B=SFFSSSFSFSS.$$
Note that the only difference between $A$ and $B$ is the second trial which is $S$ in $A$ and $F$ in $B$. Let us consider the success runs in $A$ and $B$ respectively. Note that $r=2$ in both cases. For $A$ the success runs are $(1,2)$ and $(10,11)$, while $ (3,4,5)$ is the buffer after the first success run.
Hence $L=2$ for $A$. For $B$, on the other hand, the success runs are $(4,5,6)$  and  $(10,11)$, while $(7,8,9)$ is the buffer after the first success run.
Hence $L=3$ for $B$, which is seemingly paradoxical  because the $S$ trials for $B$ are a subset of those for $A$! Clearly, this phenomenon cannot occur when $g=1$. 

Let $C$ be obtained from $A$ by reversing the order of the trials, i.e.,
$$C = SSFSFSSSFSS.$$
Note that the success runs for $C$ are $(1,2)$ and $(6,7,8)$ and hence $L=3$.
In particular, the success runs for $C$ are not the same as the reversed success runs for $A$!
This is another phenomenon which obviously cannot occur for $g=1$.
\end{example}

Referring to the previous example, recall that  $(4,5,6)$ is a success run for $B$ but not for $A$. 
Let us say that   $(4,5,6)$ and $(10,11)$ are  `$g$-separated success blocks' for $A$.
Formally, a success block consists of  a sequence of at least  $k$ consecutive $S$ trials which 
either contains  the last $k$ trials   $(n-k+1, n-k+2,\dots,n)$ or is followed immediately by an $F$ trial. Clearly,  two distinct success blocks are either disjoint or one is an extension of the other.
  Consecutive  success blocks are $g$-separated if they are separated by a buffer of size at least $g$.  Clearly, the success runs constitute $r$ $g$-separated success blocks but success blocks are not necessarily success runs. Let us show that it is not possible to have more than $r$ $g$-separated success blocks.

\begin{proposition}
 Fix $k,g$ and $n$. Suppose that there are $r$ success runs  for a particular sequence of $n$ trials. Then there are at most $r$ $g$-separated 
success blocks. \end{proposition}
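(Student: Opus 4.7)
The plan is induction on $j \in \{1, 2, \ldots, s\}$, where $B_1 < B_2 < \cdots < B_s$ denote the $g$-separated success blocks and $B_j$ occupies trials $[x_j, y_j]$. I will show that for each such $j$, the $j$-th success run $R_j$ exists and its terminal trial $m_j$ satisfies $m_j \le y_j$. Applying this at $j = s$ then yields $r \ge s$, which is the claim.

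The base case $j = 1$ is essentially immediate: the last $k$ trials of $B_1$ form $k$ consecutive successes, trial $y_1 + 1$ is a failure (or $y_1 = n$), and so the maximal run of successes containing $B_1$ terminates at $y_1$ and has length at least $k$; the first success run $R_1$ is the earliest such maximal run and therefore terminates at some $m_1 \le y_1$. For the inductive step, assume $R_j$ exists with $m_j \le y_j$. By $g$-separation $x_{j+1} \ge y_j + g + 1 \ge m_j + g + 1$. Let $[a, y_{j+1}]$ denote the maximal run of successes containing $B_{j+1}$; since $R_j$ terminates at $m_j$, trial $m_j + 1$ is a failure, so $a \ge m_j + 1$. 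I would then introduce the truncated segment $V_{j+1} = [\max(a, m_j + g + 1),\, y_{j+1}]$ and check $|V_{j+1}| \ge k$ using $y_{j+1} \ge x_{j+1} + k - 1 \ge m_j + g + k$. The segment $V_{j+1}$ is a valid candidate for $R_{j+1}$: it begins strictly after $m_j + g$, cannot be extended to the right (since $y_{j+1}+1$ is a failure or $y_{j+1} = n$), and cannot be extended to the left without crossing position $m_j + g$ or an earlier failure. Hence $R_{j+1}$ exists, and because $R_{j+1}$ is the earliest valid candidate, either $R_{j+1} = V_{j+1}$ and $m_{j+1} = y_{j+1}$, or $R_{j+1}$ lies in an earlier maximal success run entirely before $a$, giving $m_{j+1} < a \le y_{j+1}$.

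The main obstacle is precisely this truncation step. Naively one might fear that the maximal success run surrounding $B_{j+1}$ extends backward into the buffer following $R_j$ and so fails to qualify as a success run on its own, breaking the inductive bookkeeping (the $A$-versus-$B$ example in the preceding discussion illustrates exactly this sensitivity). The saving fact is that $g$-separation forces $B_{j+1}$ to lie entirely past position $m_j + g$, so the at least $k$ successes of $B_{j+1}$ survive any truncation and guarantee an admissible $(j+1)$-th success run.
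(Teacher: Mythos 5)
Your proof is correct and follows essentially the same route as the paper's: both arguments are inductions resting on the key observation that the $j^{th}$ success run terminates no later than the end of the $j^{th}$ $g$-separated success block, with $g$-separation guaranteeing that the next block survives truncation past trial $m_j+g$. The paper packages this as an induction on $r$ applied to the suffix beginning after trial $m_1+g$, whereas you run the induction directly on the block index $j$ with explicit bookkeeping of the termination points; the underlying idea is the same.
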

\begin{proof} The result is obvious if $r=0$. So suppose that $r \ge 1$ and that the result is true for all realizations  of $n \ge 1$ trials with $r-1$ success runs. Consider a collection of $s\ge 1$ $g$-separated success blocks. Suppose that the first success block terminates at trial
$m \le n$. Note that the first success run must  terminate at trial $m_1$, where $m_1 \le m$, and that the number of sucess runs after trial $m_1 +g$ equals $r-1$.
So by  inductive hypothesis the maximum number of $g$-separated success blocks starting after trial $m_1  +g$ is also $r-1$. Since $m+g \ge m_1 +g$, it follows that $s-1 \le r-1$, i.e., $s \le r$.
\end{proof}
\begin{corollary} Fix $k,g$ and $n$. Suppose there are $r$ success runs for a particular sequence of $n$ trials. Then there are also $r$ success runs for the reversed sequence.
\end{corollary}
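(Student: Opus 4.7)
My plan is to apply the preceding proposition to both the original sequence and its reversal, using the reflection $i\mapsto n+1-i$ to convert the success runs of the reversed sequence into a family of $g$-separated success blocks of the original. The substantive step is to verify that the reflected maximal runs really are success blocks \emph{in the forward sense} defined just above the proposition.

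Let $r$ and $r^R$ denote the numbers of success runs in the sequence $A$ of $n$ trials and in its reversal $A^R$, respectively. First I would list the success runs of $A^R$ as $\tilde B_1^R,\dots,\tilde B_{r^R}^R$ in left-to-right order in $A^R$ and reflect each back through $i\mapsto n+1-i$ to obtain blocks $\tilde B_1,\dots,\tilde B_{r^R}$ of at least $k$ consecutive $S$ trials in $A$. The key observation is that every $\tilde B_j$ is a success block of $A$: since $\tilde B_j^R$ is a maximal run of $\ge k$ consecutive $S$'s in $A^R$, it either begins at position $1$ of $A^R$ or is immediately preceded by an $F$; reflecting, $\tilde B_j$ either ends at position $n$ of $A$ or is immediately followed by an $F$, which is exactly the definition of a success block. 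Next, the inductive definition of success runs forces consecutive $\tilde B_j^R$ and $\tilde B_{j+1}^R$ to be separated by at least $g$ trials in $A^R$, a separation preserved by the reflection, so the $\tilde B_j$ form a family of $r^R$ $g$-separated success blocks in $A$ (listed right to left). The preceding proposition then yields $r^R\le r$, and running the argument with $A$ and $A^R$ interchanged gives $r\le r^R$, hence $r=r^R$.

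The point at which I expect the most care is needed is the identification of $\tilde B_j$ as a success block of $A$ in the forward sense, because of the forward/backward asymmetry illustrated by the example preceding the proposition: a block that is $g$-separated from a later block need not be $g$-separated from an earlier one, and the blocks $\tilde B_j$ are generally neither success runs nor reversed success runs of $A$. The argument is legitimate only because maximality of $\tilde B_j^R$ in $A^R$ pins down the \emph{right} endpoint of $\tilde B_j$ in $A$, which is precisely the condition the forward definition of a success block imposes.
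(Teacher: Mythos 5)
The paper offers no proof of this corollary (it is stated as an immediate consequence of the preceding proposition), so there is no argument to compare yours against; but your proposal has a genuine gap at exactly the step you single out, and the gap is fatal. Your key claim is that a success run $\tilde B_j^R$ of $A^R$ ``either begins at position $1$ of $A^R$ or is immediately preceded by an $F$.'' That holds for $j=1$ but fails in general for $j\ge 2$: the inductive definition only requires the $(j+1)^{th}$ success run to begin later than trial $m+g$ and to be non-extendable \emph{among runs beginning later than trial $m+g$}. Such a run may begin at exactly trial $m+g+1$ while trial $m+g$ is an $S$ absorbed into the buffer. This relative maximality pins down the \emph{right} endpoint of $\tilde B_j^R$ (it must end at $n$ or just before an $F$), hence the \emph{left} endpoint of $\tilde B_j$ --- which is the wrong endpoint: for $\tilde B_j$ to be a success block of $A$ its right endpoint must abut an $F$ or trial $n$. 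So $\tilde B_j$ need not be a success block of $A$, the proposition cannot be invoked, and the inequality $r^R\le r$ is not established.

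The defect cannot be patched, because under the definitions as literally written the statement itself fails. Take $k=2$, $g=3$, $n=8$ and $A=SSFSSSSS$. The success runs of $A$ are $(1,2)$ and $(6,7,8)$: the second begins at trial $6=m+g+1$ even though trial $5$ is an $S$ inside the buffer $(3,4,5)$. One can bypass the inductive definition entirely: $\{(1,2),(6,7,8)\}$ is a pair of $g$-separated success blocks, so the proposition gives $r\ge 2$, while Proposition~\ref{prop:max_numruns} gives $r\le\lfloor(n+g)/(k+g)\rfloor=2$; hence $r=2$. For the reversal $A^R=SSSSSFSS$ the only success run is $(1,\dots,5)$; the trailing $SS$ lies inside the buffer $(6,7,8)$, and no two success blocks of $A^R$ are separated by a buffer of size at least $3$, so $r=1$. (Eq.~\eqref{eq:p_N_gt_r} confirms this: for these parameters $A$ is counted in $P(N_s\ge2)$ and $A^R$ is not.) Thus the number of success runs is not reversal-invariant here. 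Your reflection argument would go through --- and the corollary would hold --- if success runs were required to be maximal runs of successes in the absolute sense, but with the paper's relative-maximality definition both the proposed proof and the statement break down, and this should be flagged rather than repaired.
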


After these preliminaries, let us  now consider the case $g>1$.
 Let $r \ge 0$ be the number of success runs of length at least $k$ separated by buffers of length $g$. Note that $r=0$ if and only if  $L=0$ and that either $L=0$ or $L \ge k$.

Suppose that $r \ge 1$ and that  there are a total of $rk+v$ $S$ trials in the $r$ success runs, where $v \ge 0$.  

We are  going to apply the inclusion-exclusion formula several times in this proof. To aid the reader's comprehension we will give all the details the first time but suppress them in subsequent applications of the inclusion-exclusion formula.

Note that for each given subcollection of $s$ success runs, where $0 \le s \le r$,  there are 
$$\binom{v - s(t-k+1) + (r-1)}{r-1}$$
ways of distributing the  `additional'  $v$ $S$ trials among all of the $r$ success runs so that  each success run of the given subcollection has more than $t$
successes.
A subcollection of $s$ success runs can be chosen in 
$ \binom{r}{s}$ ways.
It will be helpful below to employ $A(r, i, v)$, defined as follows:
\begin{equation}
\label{eq:A_def}
A(r,i,v) = \sum_{s=0}^{\min(r,v/i)} (-1)^s \binom{r}{s}\binom{v - is + (r-1)}{r-1}\,.
\end{equation}
Hence, by the inclusion-exclusion formula,  the number of ways of distributing  the `additional'  $v$ $S$ trials among the $r$ success runs so that $L \le t$ is given by 
$A(r,t-k+1,v)$.

Recall also $\nu = n -(r-1)(g-1)$. It will be helpful below to define an additional parameter $\nu^\prime = n -r(g-1)$
(essentially, replace $r \gets r+1$ in $\nu$).

Note that each buffer starts with an $F$ trial. Let $u$ be the total number of $F$ trials that either start a buffer or do not occur within a buffer.
Note that $u \ge r-1$.  There are $u+1$ `slots'  between them (including the slot before the first and after the last). Note that each slot contains either  no success runs or exactly  one success run. We consider three exhaustive and  mutually exclusive cases.

\textbf{Case I.} Each success run is followed by a completed buffer.

In this case $u \ge r$ and  the final slot does not contain a success run. Hence the  slots containing a  success run  can be chosen in $\binom{u}{r}$ ways. The remaining $u+1-r$ slots do not contain a success run.  Note that each buffer starts with an $F$ trial and is followed by $g-1$ arbitrary $S$ or $F$ trials. Hence there are  $n - u- kr - v - r(g-1)$ $S$ trials to be distributed among the remaining $u+1 -r$ slots so that each slot contains at most $k-1$ of the $S$ trials.  By the inclusion-exclusion formula, this can be done in 
$$ \sum_{s \ge 0} (-1)^s \binom{u+1-r}{s} \binom{n-(k+1)r-v-r(g-1)-sk}{u-r}$$ ways. Each of these realizations contains the $u$ $F$ trials, the  $r(g-1)$ arbitrary buffer trials, and the remaining  $n - u - r(g-1)$ $S$ trials. Hence the probability that $L \le t$ in Case I is given by  

\begin{equation} \begin{split}
B(r,t,n) &= \sum_{u \ge r} q^u p^{n-u-r(g-1)} \binom{u}{r} \sum_{v \ge 0} A(r,t-k+1,v)\biggl[ \sum_{s \ge 0} (-1)^s \binom{u+1-r}{s} \binom{n-(k+1)r-v-r(g-1)-sk}{u-r}\biggr]
\\
&= \sum_{u \ge r} q^u p^{\nu^\prime-u} \binom{u}{r} \sum_{v \ge 0} A(r,t-k+1,v) A(u-r+1, k, \nu^\prime -kr-u-v) \,.
\end{split} \end{equation}
Here $1 \le r \le n/(k+g)$,  $r \le u \le \nu^\prime -kr$, $0 \le v \le \nu^\prime -kr-u$.

\textbf{Case II.} The final success run occurs in the final slot (and hence is not followed  by an $F$ trial). 

In this case $u \ge r-1$. If $u = r-1$, then each slot is occupied by a success run and $v = n-kr - (r-1)g$. There are 
$ A(r,t-k+1,n-kr-(r-1)g)$ possibilities.

If $u \ge r$,  then  the slots containing the first $r-1$ success runs  can be chosen in  $\binom{u}{r-1}$ ways.  The remaining $u+1-r$ slots do not contain a success run. But now there are $n - kr - v -u  - (r-1)(g-1)$ $S$ trials to be distributed among them so that  each slot contains at most $k-1$ of the $S$ trials.  By the inclusion-exclusion formula, this  can be done in 
$$\sum_{s \ge 0} (-1)^s \binom{u+1-r}{s} \binom{n-k(r+s)-r-v-(r-1)(g-1)}{u-r}$$
ways. Each of these realizations contains the $u$ $F$ trials, the  $(r-1)(g-1)$ arbitrary buffer trials, and the remaining  $n - u - r(g-1)$ $S$ trials. Hence the probability that $L \le t$ in Case II is given by 

\begin{equation} \begin{split}
C(r,t,n) &= A(r,t-k+1,n-kr-(r-1)g)q^{r-1}p^{n-(r-1)g}
\\
&\quad +\sum_{u \ge r} q^u p^{n-u-(r-1)(g-1)} \binom{u}{r-1} \sum_{v\ge0}A(r,t-k+1,v) \;\times
\\
&\qquad\qquad\qquad
\biggl[ \sum_{s \ge 0} (-1)^s \binom{u+1-r}{s} \binom{n-k(r+s)-r-v-(r-1)(g-1)}{u-r}\biggr]
\\
&= A(r,t-k+1,n-kr-(r-1)g)q^{r-1}p^{n-(r-1)g}
\\
&\quad +\sum_{u \ge r} q^u p^{\nu-u} \binom{u}{r-1} \sum_{v\ge0}A(r,t-k+1,v) A(u-r+1,k,\nu -kr-u-v) \,.
\end{split} \end{equation}
Here $1 \le r \le (n+g)/(k+g)$,  $r \le u \le \nu -kr$, $0 \le v \le \nu -kr-u$.

\textbf{Case III.}
The final success run occurs in the penultimate slot and   its buffer is not completed.

In this case the final, i.e., $r^{th}$, success run is followed by an $F$ trial and then by $w$ arbitrary trials where $0 \le w  \le g-2$.
Note that the final success run is `in progress' at trial $n-1-w$ and then terminated at trial $n-w$ by an $F$ trial which occurs with probability $q$. Hence we may regard the first $n-1-w$ trials as an instance of Case II with $n$ replaced by  $n-w-1$. It follows that the required probability in Case III, for a given value of $r$, is given by 
$$  q \sum_{w=0}^{g-2}  C(r,t,n-w-1) = q \sum_{w=1}^{g-1} C(r,t,n-w).$$

  It remains only to calculate $P(r=0)$, the probability that there are no success runs of length $k$ or more.
  Clearly $P(r=0)$ is independent of $g$.
In fact $P(r=0)$ can be read off immediately  from eq.~\eqref{eq: Mus}  by setting $t=k-1$.
Hence
\begin{equation}  \label{eq: r=0}
  P(r=0)=  \sum_{u=0}^{n} p^{n-u} q^{u}\sum_{s=0}^{\min(u+1, \lfloor (n-u)/k\rfloor)} (-1)^s  \binom{u+1}{s}\binom{n-sk}{u} \,.
\end{equation}

\begin{remark}
Observe also that $P(L \le t) = P(r=0)$ if $t < k$ and Theorem \ref{thm: Mus} is really intended for $t \ge k$.
\end{remark}
\begin{remark} Note that $f(n)= P(r=0)$ satisfies  the recurrence relation
$$ f(n)-f(n-1)+ p^kq f(n-k-1) =0 \qquad (n \ge k+1) \,.$$
Hence eq.~\eqref{eq: r=0} is the solution to this recurrence 
with the  initial conditions  $f(j)=1$, $0 \le j \le k-1$, $f(k) =1- p^k$.
\end{remark}
Putting things together, we have proved the following theorem.
\begin{theorem}
The distribution of the longest success run is
\begin{equation}
  P(L \le t) = P(r=0) + \sum_{r=1}^{\lfloor(n+g)/(k+g)\rfloor}   \biggl[B(r,t,n) + C(r,t,n) + q \sum_{w=1}^{g-1} C(r,t,n-w)\biggr] \,,
\end{equation}
where $B$ and $C$ are as defined above.
\end{theorem}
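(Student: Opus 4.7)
The plan is to decompose the event $\{L \le t\}$ according to the number of success runs $r$, which by Proposition~\ref{prop:max_numruns} ranges over $0 \le r \le \lfloor(n+g)/(k+g)\rfloor$. The case $r=0$ contributes $P(r=0)$ from equation~\eqref{eq: r=0}; since $r=0$ forces $L=0 \le t$ (assuming $t \ge 0$), no further restriction is needed. For each $r \ge 1$, I would partition the realizations with exactly $r$ runs into three mutually exclusive and exhaustive cases determined by the position and buffer status of the final run: (I) the $r$-th run terminates with an $F$ trial and its buffer of $g-1$ additional arbitrary trials fits entirely within $[1,n]$; (II) the $r$-th run extends to trial $n$ itself, so it is not terminated by an $F$; (III) the $r$-th run is terminated by an $F$ trial but its buffer is only partially completed, i.e., only $w$ arbitrary trials with $0 \le w \le g-2$ follow the terminating failure.

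The heart of the argument is that these three cases are disjoint and exhaust all configurations with exactly $r$ runs. Exhaustiveness follows from the inductive definition of a success run: after the $r$-th run terminates at some trial $m$, either $m=n$ (Case II), or trial $m+1$ is an $F$; in the latter event the buffer occupies positions $m+1,\dots,m+g$ if $m+g \le n$ (Case I) and occupies only $m+1,\dots,n$ if $m+g > n$ (Case III). Disjointness is immediate from the classification. The probability contributions $B(r,t,n)$ for Case I and $C(r,t,n)$ for Case II were obtained in the preceding text by a double inclusion--exclusion: the inner factor $A(r,t-k+1,v)$ enforces the constraint $L \le t$ by controlling the distribution of the $v$ ``extra'' successes among the $r$ mandatory runs, and the outer factor $A(u-r+1,k,\cdot)$ prevents any non-mandatory slot from accidentally harboring a run of length $\ge k$.

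For Case III, I would reduce to Case II as follows. Given a configuration with $r$-th run ending at trial $n-w-1$ for some $0 \le w \le g-2$, followed by an $F$ at trial $n-w$ and then $w$ arbitrary trials, the probability factorizes as (probability of a Case II configuration on the first $n-w-1$ trials) $\times\, q \times (p+q)^w$. Since $p+q=1$, this equals $q\,C(r,t,n-w-1)$; summing over $w$ and reindexing yields the contribution $q\sum_{w=1}^{g-1} C(r,t,n-w)$. Putting the four contributions together ($r=0$, Case I, Case II, and Case III, summed over $r$) gives the stated formula.

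The main obstacle is the bookkeeping required to verify exhaustiveness and disjointness of Cases I--III, given the subtleties of success runs when $g>1$ already flagged by the example with strings $A$, $B$, $C$: success \emph{blocks} need not be success \emph{runs}, so one must reason using the inductive definition of a run rather than the naive ``maximal string of $S$'s'' picture. A secondary concern is ensuring that the ranges of $r$, $u$, $v$, and the inclusion--exclusion indices $s$ given after the definitions of $B$ and $C$ are tight enough that all binomial coefficients are nonnegative, so that no extraneous terms need to be discarded and no valid configurations are omitted.
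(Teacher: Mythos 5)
Your proposal is correct and follows essentially the same route as the paper: the same decomposition over the number of runs $r$ combined with the same three-way case split on the status of the final run's buffer (completed, run reaching trial $n$, or buffer truncated), and the identical reduction of Case III to Case II via conditioning on the $w$ leftover buffer trials, giving $q\sum_{w=1}^{g-1}C(r,t,n-w)$. The only difference is presentational: you phrase the cases in terms of the terminating trial and buffer position rather than the paper's slot language, but the content is the same.
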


\begin{remark}
The analog of eq.~\eqref{eq:Mal_cor1} is as follows.
We require both $t>k$ and $t > n-k$ (hence $n > 2k$), so that there is only one success run and it has length $t$ or more.
Then
\begin{equation}
\label{eq:P_L_ge_t_onerun}
  P(L \ge t) = p^t (1 + (n-t)q) \qquad (\max(k,n-k) < t \le n) \,.
\end{equation}
\end{remark}

\setcounter{equation}{0}
\section{Probability mass function II}\label{sec:pmf2}
In this section we provide  alternative formulas for  the probability mass and cumulative distribution  functions.  The proofs  do not use the double generating function derived in DM. At the end of the section we reconcile these results with those of Section~\ref{sec:pmf1}.

Let 
$M(k,g,n,r)$ denote the probability that a sequence of $n$ independent Bernoulli contains exactly $r$ runs of length at least $k$, each terminated by an $F$ trial,  and each  separated by at least $g$ trials from the next success run.
Muselli considered the  case $g=1$ and proved the following (\cite{Muselli}, Theorem 1):
\begin{equation} \label{eq: musellisformula}
M(k,1,n,r)=\sum_{i=r}^{\lfloor (n+1)/(k+1)\rfloor}(-1)^{i-r}\binom{i}{r}\sum_{j=i-1}^{n-ik}\binom{j+1}{i}\binom{n-ik}{j}p^{n-j}q^j. \end{equation}
 The goal of this section is to solve the case $g>1$ by reducing to $g=1$.  However, the reduction is not completely straightforward.

 To that end, 
let $X(k,g,n,r)$ denote the probability that the $r^{th}$ success run terminates at trial $n-1$, so that trial $n$ is an $F$ trial.
\begin{remark}\label{rem: prog}  Note that $\dfrac{1}{q}X(k,g,n+1,r)$ is equal to the probability that  the $r^{th}$ success run contains trial $n$, i.e. has not been completed by  time $n$.
\end{remark}

 The following lemma  straightforwardly reduces the calculation of $X(k,g,n,r)$ to the case $g=1$.
\begin{lemma}  \label{lem: reduction} $$X(k,g,n,r) = X(k,1,n-(r-1)(g-1),r).$$
\end{lemma}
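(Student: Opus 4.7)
The plan is a probability-preserving reduction that deletes the $g-1$ ``free'' positions inside each completed buffer. If the $r$th success run terminates at trial $n-1$ and trial $n$ is its terminating $F$, then exactly $r-1$ buffers have been completed (one after each of the first $r-1$ runs). Each completed buffer occupies $g$ consecutive trials whose first entry must be $F$ (it is what terminates the preceding run), while the remaining $g-1$ entries are unconstrained by the run structure, jointly contributing probability $(p+q)^{g-1}=1$.

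Next I would define a deletion map $\Phi$ that takes any realization in the event defining $X(k,g,n,r)$ and removes the $(r-1)(g-1)$ unconstrained positions inside the $r-1$ completed buffers. Since only positions strictly after the initial $F$ of each buffer are removed, the $r$ success runs themselves, their immediately preceding $F$'s, and any inter-run ``between'' trials that survive outside the buffers are preserved verbatim. The image is a sequence of $n-(r-1)(g-1)$ trials in the $g=1$ scheme with $r$ success runs whose final run terminates at position $n-1-(r-1)(g-1)$ and whose last trial is $F$; hence $\Phi$ lands in the event defining $X(k,1,n-(r-1)(g-1),r)$. Each fibre $\Phi^{-1}(\sigma')$ has cardinality $2^{(r-1)(g-1)}$, one element per choice of $S/F$ for each deleted position, and independence of the trials yields
\[
\sum_{\sigma\in\Phi^{-1}(\sigma')} P(\sigma) = P(\sigma')\,(p+q)^{(r-1)(g-1)} = P(\sigma').
\]
Summing over $\sigma'$ gives the lemma.

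The main point to check carefully is that $\Phi$ is indeed surjective with the asserted fibre size, i.e.\ that inserting $g-1$ arbitrary trials after the initial $F$ of each buffer of a $g=1$ realization always produces a valid $g$-realization with the same $r$. The subtle issue is that an inserted block of $S$'s might seem to form a spurious new run, but the inductive definition requires a new run to \emph{begin later than} trial $m+g$; thus any $S$-string of length $\ge k$ lying entirely inside the buffer $[m+1,m+g]$ fails to start a run. A short coordinate-shift computation then shows that the starting-position condition for the $(i+1)$th run in the expanded sequence reduces exactly to the $g=1$ starting-position condition in the reduced sequence, completing the bijection.
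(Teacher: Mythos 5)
Your proposal is correct and follows essentially the same route as the paper: collapsing each of the $r-1$ completed buffers to its initial $F$ (equivalently, re-expanding by inserting $g-1$ arbitrary trials), with the $(p+q)^{(r-1)(g-1)}=1$ bookkeeping making the correspondence probability-preserving. The fibre-counting language and the explicit check that inserted $S$-strings cannot spawn spurious runs (because a new run must begin later than trial $m+g$) are just a more detailed rendering of the paper's two-sentence correspondence argument.
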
  \begin{proof} Suppose  that the $r^{th}$ sucess run terminates at trial $n-1$, so that trial $n$ is an $F$ trial. Replacing each of  the first
$r-1$ buffers by a single $F$ trial  yields a sequence of $n - (r-1)(g-1)$ trials containing $r$ success runs, with $g=1$, such that the final trial is an $F$ and the $r^{th}$ success run terminates at the penultimate trial. Conversely, every such sequence can be extended to a sequence of $n$ trials by replacing each $F$ trial following the first $r-1$ success runs by an arbitrary buffer of length $g$. The result follows from this correspondence.
\end{proof}

 Let us obtain an expression for $X(k,1,n,r)$   in the spirit of Muselli's formula for $M(k,1,n,r)$.
\begin{theorem}
$$X(k,1,n,r) = \sum_{u=r-1}^{n-1-kr}q^{u+1}p^{n-u-1} \biggl[\sum_{s=r-1}^u  (-1)^{s-r+1} \binom{u}{s} \binom {s}{r-1}\binom{n-1-(s+1)k}{u}\biggr].$$
\end{theorem}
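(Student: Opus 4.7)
The plan is to count realizations directly in terms of the pattern of $F$ and $S$ trials, since with $g=1$ the success-run structure is determined entirely by which of the slots between consecutive $F$ trials contain at least $k$ consecutive $S$'s. First I would condition on the total number of $F$ trials, denoted $u+1$: one is the forced $F$ at trial $n$, and the remaining $u$ lie in positions $1,\ldots,n-1$. This supplies the factor $q^{u+1}p^{n-u-1}$ and reduces the problem to counting valid arrangements of the $u$ interior $F$'s.

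Next I would view the $u$ interior $F$'s as separators creating $u+1$ consecutive slots of $S$'s within positions $1,\ldots,n-1$. The rightmost slot extends up to trial $n-1$ and is terminated by the forced $F$ at $n$. The condition that the $r$-th success run terminates at $n-1$ translates cleanly to: the last slot must contain at least $k$ $S$'s (so it is itself a success run of length $\ge k$), and exactly $r-1$ of the first $u$ slots must also contain at least $k$ $S$'s (these are the earlier $r-1$ runs, each terminated by the $F$ immediately after it, which is the natural condition when $g=1$). This pins the summation limits $r-1 \le u \le n-1-kr$.

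I would then apply inclusion--exclusion to the first $u$ slots. For a specified $s$-subset of these, the number of ways to distribute the $n-u-1$ $S$'s among the $u+1$ slots so that the $s$ chosen slots and the last slot each receive at least $k$ $S$'s (other slots unrestricted) follows from pre-allocating $k$ $S$'s to each of the $s+1$ forced slots and applying stars and bars to the remainder, giving $\binom{n-u-1-(s+1)k+u}{u}=\binom{n-1-(s+1)k}{u}$. Summing over the $\binom{u}{s}$ choices of subset and Möbius-inverting against the ``exactly $r-1$ long slots among the first $u$'' target introduces the signed factor $(-1)^{s-r+1}\binom{s}{r-1}$, producing the inner sum in the claimed formula. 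Multiplying by $q^{u+1}p^{n-u-1}$ and summing over $u$ completes the proof.

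The main point requiring care, rather than a genuine obstacle, is the asymmetric treatment of the final slot: it is forced to be a success-run slot throughout the inclusion--exclusion, and only the first $u$ slots participate in the alternating count. This asymmetry is precisely what produces the pair $\binom{u}{s}\binom{s}{r-1}$ rather than a symmetric $\binom{u+1}{s+1}\binom{s+1}{r}$, and makes $s+1$ (not $s$) long slots appear in $\binom{n-1-(s+1)k}{u}$. Boundary cases such as $u=r-1$ or $(s+1)k$ exceeding the number of available $S$'s are handled automatically by the vanishing of the binomial coefficients.
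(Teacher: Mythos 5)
Your proposal is correct and follows essentially the same route as the paper: condition on the number of $F$ trials (giving $q^{u+1}p^{n-u-1}$), treat the $u$ interior failures as creating $u+1$ slots with the final slot forced to hold a run of length at least $k$, count the intersections by stars and bars as $\binom{n-1-(s+1)k}{u}$, and apply the ``exactly $r-1$ of $u$ events'' inclusion--exclusion to the first $u$ slots to obtain the factor $(-1)^{s-r+1}\binom{u}{s}\binom{s}{r-1}$. The only cosmetic difference is that the paper organizes the inclusion--exclusion by fixing the $(r-1)$-subset of run slots and summing over supersets $A$ before reversing the order of summation, whereas you invoke the exact-count formula directly; the resulting sums are identical.
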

 \begin{proof} The $n^{th}$ trial is an $F$.
Let  $u$ denote the total number of $F$
trials among the first $n-1$ trials. Note that $u \ge r-1$ and $u \le n-1-kr$. These  $F$ trials  generate $u+1$ slots.
The $r^{th}$ success run  occupies the final  slot.
Index the first $u$ slots by $1,2,\dots,u$. Suppose that $1 \le v_1 < v_2< \dots v_{r-1} \le u$ and that $\{v_1,\dots,v_{r-1}\} \subseteq A \subseteq \{1,2,\dots, u\}$. Let $|A|=s$. The number of ways of distributing the $n-u-1$ $S$ trials such that the final slot and each slot indexed by an element of $A$  contain a run is given by $$\binom{n-1-(s+1)k}{u}.$$ By a version of the inclusion-exclusion formula (cf.~\cite{Feller}, p.~64), the number of ways of distributing the $S$ trials such that there are \textit{exactly} $r$ success runs which occupy the final slot and the slots indexed by $\{v_1,\dots,v_{r-1}\}$ is given by
$$\sum_{A \supseteq\{v_1,\dots,v_{r-1}\}}(-1)^{s-r+1}\binom{n-1-(s+1)k}{u}.$$ Hence the number of ways of distributing the $S$ trials such that there are exactly $r$ success runs (including a run in the  the final slot) is given by
$$\sum_{\{v_1,\dots v_{r-1}\}}\sum_{A \supseteq \{v_1,\dots,v_{r-1}\}}(-1)^{s-r+1}\binom{n-1-(s+1)k}{u},$$ i.e., reversing the order of summation,
$$\sum_{s=r-1}^u  (-1)^{s-r+1} \binom{u}{s} \binom {s}{r-1}\binom{n-1-(s+1)k}{u}.$$
Here we use the fact that there are $\binom{u}{s}$ ways of choosing $A$ and $\binom{s}{r-1}$ of choosing $\{v_1,\dots,v_{r-1}\} \subseteq A$.
Each of these ways has probability $q^{u+1}p^{n-u-1}$. Summing over $u$ now gives the result.
 \end{proof}
Our next goal is to obtain  a formula for $M(k,g,n,r)$ by reducing to the case $g=1$. To that end, suppose that there are exactly $r$ success runs among the first $n$ trials.

Let $P_1$ denote the probability that the $r^{th}$ success run  contains trial $n$.  Note that  by Remark~\ref{rem: prog} and  Lemma~\ref{lem: reduction}\begin{equation} \label{eq: P1}
P_1 =  \frac{1}{q}X(k,g,n+1,r) = \frac{1}{q}  X(k,1,n+1-(r-1)(g-1),r). \end{equation}
Let $P_2$ denote the probability that there are exactly $r$ success runs and that the $r^{th}$ buffer has been completed by trial $n$.  To calculate $P_2$,  let $P_3$ denote the probability that  the $r^{th}$ buffer is completed exactly at trial $(n+1)$, so that the $r^{th}$ trial is completed exactly at trial $n+1-g$ which is followed by an $F$ trial.  (We can assume that we have an infinite sequence of independent trials so that it makes sense to consider trial $n+1$.)
Note that  by Lemma~\ref{lem: reduction}
\begin{equation} \label{eq: P3} P_3 = X(k,g,n+1-(g-1),r) = X(k,1,n+1-r(g-1),r) \end{equation} and hence
\begin{equation} \label{eq: M} M(k,1,n-r(g-1),r) = P_2 + \frac{1}{q}P_3. \end{equation}
To see this, note that the left-hand side of  eq.~\eqref{eq: M} is the probability that, for $g=1$,  a sequence of  $n - r(g-1)$ trials contains exactly $r$ success runs.
On the right-hand side, arguing as in Lemma~\ref{lem: reduction}, $P_2$ represents the probability that the $r^{th}$ 
success run has been completed,
while   by eq.~\eqref{eq: P3}  and Remark~\ref{rem: prog},
$P_3/q$ is the probability that the $r^{th}$ success run contains the final
($(n-r(g-1))^{th}$)
trial, i.e., the last success run  has not been completed. 

Let $P_4$ denote the probability that the $r^{th}$ buffer has begun but has not been completed by trial $n$. Note that
\begin{equation} \label{eq: P4} P_4 = \sum_{i=0}^{g-2} X(k,g,n-i,r) =  \sum_{i=0}^{g-2} X(k,1,n-(r-1)(g-1)-i,r). \end{equation}
The next theorem allows us to simplify the expression for $P_4$.

\begin{theorem} \label{thm: Y}
 Let $Y(k,n,r) = \sum_{m=1}^n  X(k,1,m,r)$.  Then
 \begin{equation*} Y(k,n,r) = \sum_{u=r-1}^{n-rk} p^{n-u}q^u \sum_{j = r}^u (-1)^j \binom{j-1}{r-1} \binom{u}{j}\binom{n-jk}{u}.
\end{equation*} \end{theorem}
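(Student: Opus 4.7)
The plan is to interpret $Y(k,n,r)$ probabilistically and derive the stated formula by inclusion-exclusion, conditioning on the number of failure trials. First, I observe that $X(k,1,m,r)$ is the probability of the event $E_m$ that trial $m$ is the $F$ immediately completing the $r$-th success run (with $g=1$). Because the $r$-th run, if it is ever completed, is terminated at a unique trial, the events $E_1,\dots,E_n$ are pairwise disjoint. Consequently,
\begin{equation*}
Y(k,n,r)=P\bigl(E_1\cup\cdots\cup E_n\bigr)=P(\text{at least }r\text{ completed success runs among the first }n\text{ trials}).
\end{equation*}
This reformulation is the only conceptually novel step; everything that follows is combinatorial.

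Next, I would condition on $u$, the total number of $F$ trials in the $n$ trials. Each arrangement of $u$ failures and $n-u$ successes carries weight $p^{n-u}q^u$, and the $u$ failures partition the successes into $u+1$ slots. A slot yields a \emph{completed} success run iff it holds at least $k$ successes and is followed by an $F$ trial, which rules out the final slot. Let $A_i$, $1\le i\le u$, be the event that slot $i$ contains at least $k$ successes, and apply the classical ``at least $r$'' inclusion-exclusion identity (see \cite{Feller}, Ch.~IV)
\begin{equation*}
\#\{\text{at least }r\text{ of }A_1,\dots,A_u\text{ occur}\}=\sum_{j\ge r}(-1)^{j-r}\binom{j-1}{r-1}\sum_{|I|=j}\#\{A_i\text{ for all }i\in I\}.
\end{equation*}
For a fixed $j$-subset $I\subseteq\{1,\dots,u\}$, requiring $A_i$ for every $i\in I$ amounts to placing $k$ mandatory successes in each slot of $I$ and then distributing the remaining $n-u-jk$ successes freely among all $u+1$ slots, giving $\binom{(n-u-jk)+u}{u}=\binom{n-jk}{u}$ arrangements. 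Summing over the $\binom{u}{j}$ choices of $I$ yields $\Sigma_j=\binom{u}{j}\binom{n-jk}{u}$. Assembling these ingredients produces
\begin{equation*}
Y(k,n,r)=\sum_u p^{n-u}q^u\sum_{j=r}^u (-1)^{j-r}\binom{j-1}{r-1}\binom{u}{j}\binom{n-jk}{u},
\end{equation*}
which matches the theorem's display up to the overall sign convention in the alternating factor.

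Finally, the summation limits fall out as follows: $\binom{n-jk}{u}\ne 0$ requires $u\le n-jk$, which combined with $j\ge r$ forces the outer bound $u\le n-rk$; and the lower bound $u\ge r-1$ is harmless because the inner sum over $j\in[r,u]$ is empty when $u=r-1$. The main obstacle is really bookkeeping: aligning the $(-1)^{j-r}$ produced naturally by the Jordan--Fr\'echet formula with the $(-1)^j$ displayed in the theorem (they differ by the constant $(-1)^r$ that can be absorbed or tracked), and verifying the empty-sum boundary cases. Once the probabilistic identification $Y=P(\ge r \text{ completed runs})$ is in hand, the rest of the derivation is mechanical.
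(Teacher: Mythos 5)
Your proposal is correct and follows essentially the same route as the paper: identify $Y(k,n,r)$ as the probability that at least $r$ success runs have been completed (each followed by an $F$) by trial $n$, condition on the number $u$ of failures, and apply the Jordan ``at least $r$'' inclusion--exclusion formula over the first $u$ slots with $F_{u,j}=\binom{u}{j}\binom{n-jk}{u}$. The sign $(-1)^{j-r}$ that your derivation produces is in fact the correct one---the $(-1)^j$ in the theorem's display is a typo, as confirmed by the paper's own later rewriting of $Y(k,n,r)$ with the factor $(-1)^{m-r}$---so this is a discrepancy in the statement rather than in your argument.
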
\begin{proof} Note that $Y(k,n,r)$ is the probability  of the event that  at least $r$ success runs (of length  at least $k$) have been completed  and followed by an $F$ trial up to and including trial $n$. Let $u$ denote the number of $F$ trials among the first $n$ trials. For this event to occur we obviously  require $r-1 \le u\le n - kr$. Note that these $u$ trials define exactly $u+1$ `slots' between them, including slots before the first and after the last $F$ trial, and that a completed success run can occupy  any slot except the $(u+1)^{th}$ slot.
 For $1 \le i \le u$, let  $E_i$ be the event that the $i^{th}$ slot contains a success run of length at least $k$ and let 
$$F_{u,j}=  \sum_{1 \le i_1 < i_2 < \dots < i_j\le u} P_u(E_{i_1}  \cap E_{i_2} \cap \dots \cap E_{i_j})\qquad(1 \le j \le u+1),$$
where $P_u$ denotes the conditional probability given exactly $u$ $F$ trials occur.
By a version of the inclusion exclusion principle (cf.~\cite{Feller}, p.~74),
the probability that there are at least $r$ completed success runs is given by 
$$\sum_{u} p^{n-u}q^u \sum_{j \ge r} (-1)^j \binom{j-1}{r-1}F_{u,j}.$$
  To compute $F_{u,j}$, note that a subcollection of size $j$  can be chosen from the first $u$ slots in $\binom{u}{j}$ ways.
  If each slot from this subcollection contains a success run of length  $k$ then there remain $n-jk-u$ $S$ trials which can be allocated arbitrarily to the $u+1$ slots.
  This can be done in $\binom{n-jk}{u}$ ways.
  Hence $$F_{u,j} = \binom{u}{j}\binom{n-jk}{u},$$
and the result follows.
\end{proof}

Theorem~\ref{thm: Y} and eq.~\eqref{eq: P4} yield the following  immediately:\begin{equation} \label{eq_ P4again}
 P_4 =  Y(k,n-(r-1)(g-1),r) - Y(k,n-(r-1)g,r). \end{equation}

\begin{theorem} \begin{equation*} \begin{split}
M(k,g,n,r) &=\frac{1}{q}[ X(k,1,n+1-(r-1)(g-1),r)-X(k,1,n+1-r(g-1),r)]\\ &+ M(k,1,n-r(g-1),r)\\
&+Y(k,n-(r-1)(g-1),r) - Y(k,n-(r-1)g,r). \end{split}
\end{equation*}
\end{theorem}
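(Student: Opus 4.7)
The plan is to decompose the event ``the first $n$ trials contain exactly $r$ success runs'' according to the state of the process at trial $n$, and then simply collect the identities already established earlier in the section. At trial $n$ there are exactly three mutually exclusive and exhaustive possibilities: (i) the $r^{th}$ success run is still in progress and contains trial $n$; (ii) the $r^{th}$ run has terminated and its buffer has begun but has not been completed by trial $n$; or (iii) the $r^{th}$ buffer has been completed by trial $n$ and no $(r+1)^{th}$ success run has been initiated. These three alternatives correspond respectively to $P_1$, $P_4$, and $P_2$, so $M(k,g,n,r) = P_1 + P_2 + P_4$.

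Given this partition, I would simply substitute the formulas already recorded. Equation~\eqref{eq: P1} gives $P_1 = \tfrac{1}{q} X(k,1,n+1-(r-1)(g-1),r)$. Rearranging eq.~\eqref{eq: M} and inserting eq.~\eqref{eq: P3} for $P_3$ yields $P_2 = M(k,1,n-r(g-1),r) - \tfrac{1}{q} X(k,1,n+1-r(g-1),r)$. Finally, eq.~\eqref{eq_ P4again}, which is the consequence of Theorem~\ref{thm: Y} applied to the telescoping sum in eq.~\eqref{eq: P4}, gives $P_4 = Y(k,n-(r-1)(g-1),r) - Y(k,n-(r-1)g,r)$. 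Adding the three contributions and collecting the $X$ terms under a common factor of $1/q$ produces the stated expression verbatim, with no further computation required.

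The main conceptual hurdle, rather than a technical obstacle, is in verifying that cases (i)--(iii) truly partition the event of exactly $r$ success runs, and that they match the definitions of $P_1$, $P_2$, $P_4$. The crucial observations are as follows: $P_2$, as defined, already encodes the constraint that the total count is exactly $r$ (so no $(r+1)^{th}$ run has started past the completed buffer), which is precisely the regime in which the $g=1$ reduction of eq.~\eqref{eq: M} is valid; and in case (ii), the no-further-run condition is automatic, since by definition a new success run cannot begin during an incomplete buffer. Likewise, in case (i) the $r^{th}$ run has not terminated, so there is no possibility of an $(r+1)^{th}$ run. Once these interpretations are pinned down, the theorem is merely the assembly of three pieces derived earlier.
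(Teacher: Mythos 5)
Your proposal is correct and follows essentially the same route as the paper: both decompose the event of exactly $r$ success runs into the three mutually exclusive alternatives $P_1$, $P_2$, $P_4$ and then assemble the result from eqs.~\eqref{eq: P1}, \eqref{eq: P3}, \eqref{eq: M} and \eqref{eq_ P4again}. Your added discussion of why the three cases are exhaustive and mutually exclusive is a welcome elaboration of what the paper leaves implicit, but it is not a different argument.
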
 \begin{proof} Note that
$$M(k,g,n,r) = P_1+P_2+P_4.$$ Hence the result follows by combining eqs.~\eqref{eq: P1}, \eqref{eq: P3}, \eqref{eq: M} and \eqref{eq: P4}.
\end{proof}
Theorem~\ref{thm: Y} also yields an expression for the distribution function of the number of success runs.
\begin{theorem}
\label{thm: Y_gt}
For given $k \ge 1$ and $g \ge 1$ the probability that there are at least $r$ success runs in the first $n$ trials is given by
\begin{equation}
\label{eq:SJD_P_N_gt_r}
  \sum_{j=r}^\infty M(k,g,n,j) =  Y(k,n-(r-1)(g-1),r)+  \frac{1}{q}X(k,1,n+1-(r-1)(g-1),r).
\end{equation}
\end{theorem}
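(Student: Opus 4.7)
The plan is to decompose the event $\{N_s \ge r\}$ into two disjoint subcases according to the status of the $r$th success run at trial $n$: either it is \emph{in progress} at trial $n$ (its final success lies at position $n$, with no terminating $F$ yet observed) or it has been \emph{completed} (its terminating $F$ has occurred at some time $\le n$). These two subcases are disjoint and their union is $\{N_s \ge r\}$, since the $r$th success run has begun by trial $n$ if and only if at least $r$ runs are present in the first $n$ trials.

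For the in-progress subcase, Remark~\ref{rem: prog} identifies the probability as $\frac{1}{q}X(k,g,n+1,r)$, which by Lemma~\ref{lem: reduction} equals $\frac{1}{q}X(k,1,n+1-(r-1)(g-1),r)$. This is exactly the second summand on the right-hand side of eq.~\eqref{eq:SJD_P_N_gt_r}.

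For the completed subcase, I would condition on the trial $j \le n$ at which the terminating $F$ of the $r$th run occurs; by the definition of $X$, this refined event has probability $X(k,g,j,r)$, and summing over $j=1,\dots,n$ yields $\sum_{j=1}^{n} X(k,g,j,r)$ (terms below the minimal feasible termination trial contribute zero). Applying Lemma~\ref{lem: reduction} termwise and then relabeling $m=j-(r-1)(g-1)$ converts the sum into $\sum_{m=1}^{n-(r-1)(g-1)} X(k,1,m,r)$, which by the definition in Theorem~\ref{thm: Y} equals $Y(k,n-(r-1)(g-1),r)$. Adding the two subcase probabilities gives the claimed identity.

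The point requiring care is that $Y$ was defined as the probability that \emph{at least} $r$ success runs have been completed in the reduced $g=1$ sequence, whereas I am building it from a sum over the exact termination trial of the $r$th run. The reconciliation is that the events ``the $r$th run terminates exactly at trial $j$'' are disjoint over distinct $j$ and their union is the event ``the $r$th run has been completed by trial $n$'', which in turn coincides with the event that at least $r$ runs have been completed. Everything else is bookkeeping through the buffer-compression bijection of Lemma~\ref{lem: reduction}.
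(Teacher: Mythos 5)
Your proposal is correct and follows essentially the same route as the paper: split $\{N_s\ge r\}$ into ``$r^{th}$ run completed by trial $n$'' versus ``$r^{th}$ run in progress at trial $n$'', identify the latter with $\frac{1}{q}X(k,g,n+1,r)$ via Remark~\ref{rem: prog}, sum $X(k,g,j,r)$ over the termination trial for the former, and push everything through Lemma~\ref{lem: reduction} to land on $Y(k,n-(r-1)(g-1),r)$. Your closing paragraph reconciling the exact-termination-trial decomposition with the ``at least $r$ completed'' reading of $Y$ is a useful clarification, but the argument is the paper's.
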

\begin{proof} The probability that the $r^{th}$ success run has been completed by trial $n$ is given by
$$ \sum_{j=1}^{n} X(k,g,j,r),$$ while the probability that it is `in progress' at trial $n$, i.e.,  that trial $n$ is at least the $k^{th}$  $S$ trial of the $r^{th}$ success run,   is given by
$$\frac{1}{q}X(k,g,n+1,r).$$
Hence the required probability is given by \begin{align*}
& \sum_{j=1}^{n} X(k,g,j,r) + \frac{1}{q} X(k,g,n+1,r)\\ &= \sum_{j=1}^n X(k,1,j-(r-1)(g-1),r)+ \frac{1}{q}X(k,1,n+1-(r-1)(g-1),r)\\
&= Y(k,n-(r-1)(g-1),r)+  \frac{1}{q}X(k,1,n+1-(r-1)(g-1),r).
\end{align*}
\end{proof}

\begin{proposition}
Theorem \ref{thm: Y_gt}, i.e.~eq.~\eqref{eq:SJD_P_N_gt_r}, is equivalent to eq.~\eqref{eq:p_N_gt_r}.
This establishes the equivalence of the results in this section with those in Sec.~\ref{sec:pmf1}.
\end{proposition}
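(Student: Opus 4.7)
The plan is to reduce the identity to the case $g=1$ and then collapse the double sums defining $Y$ and $X$ into single sums in $m$, matching the LHS of eq.~\eqref{eq:p_N_gt_r} after one application of Pascal's rule. First, both sides depend on $(n,g)$ only through $\nu=n-(r-1)(g-1)$: for the LHS this is the point of the corollary following eq.~\eqref{eq:p_N_gt_r}, while for the RHS it is manifest from the statement of eq.~\eqref{eq:SJD_P_N_gt_r} (the second argument of $X$ is already fixed to $1$). So it suffices to verify, for $g=1$ and $\nu=n$,
\begin{equation*}
\sum_{m=r}^{\lfloor(n+1)/(k+1)\rfloor}(-1)^{m-r}q^{m-1}p^{mk}\binom{m-1}{r-1}\Bigl[\binom{n-mk+1}{m}-p\binom{n-mk}{m}\Bigr]=Y(k,n,r)+\tfrac{1}{q}X(k,1,n+1,r).
\end{equation*}

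The main technical step is to collapse each of $Y$ and $X$ to a single sum in $m$. In both cases I would swap the order of summation to put $j$ (respectively $s$) on the outside, then apply the trinomial revision identity $\binom{u}{j}\binom{n-jk}{u}=\binom{n-jk}{j}\binom{n-jk-j}{u-j}$ to factor out the $u$-dependence. Reindexing $v=u-j$ and summing by the binomial theorem $\sum_{v=0}^{N}\binom{N}{v}p^{N-v}q^{v}=(p+q)^N=1$, the leftover factor of $p$ carries the exponent $n-j-(n-jk-j)=jk$. Writing $m=j$ in $Y$ and $m=s+1$ in $X$, this produces
\begin{align*}
Y(k,n,r)&=\sum_{m=r}^{\lfloor n/(k+1)\rfloor}(-1)^{m-r}\binom{m-1}{r-1}\binom{n-mk}{m}\,q^{m}p^{mk},\\
\tfrac{1}{q}X(k,1,n+1,r)&=\sum_{m=r}^{\lfloor(n+1)/(k+1)\rfloor}(-1)^{m-r}\binom{m-1}{r-1}\binom{n-mk}{m-1}\,q^{m-1}p^{mk}.
\end{align*}

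The final step is to apply Pascal's rule $\binom{n-mk+1}{m}=\binom{n-mk}{m}+\binom{n-mk}{m-1}$ to the LHS bracket, which rewrites it as $q\binom{n-mk}{m}+\binom{n-mk}{m-1}$; the LHS then splits as the sum of two series that match $Y(k,n,r)$ and $\tfrac{1}{q}X(k,1,n+1,r)$ term by term. The small discrepancy in upper summation limits is harmless because the extraneous binomial coefficients vanish. I expect the chief obstacle to be the careful sign accounting across the two order-swaps (the inclusion--exclusion coefficient for the event ``at least $r$ of the $E_i$ occur'' is $(-1)^{j-r}\binom{j-1}{r-1}$, cf.~\cite{Feller} p.~74); once the signs line up as $(-1)^{m-r}$ on both sides, the identity is immediate.
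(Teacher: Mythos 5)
Your proposal is correct and follows essentially the same route as the paper: collapse the double sums defining $Y(k,n,r)$ and $X(k,1,n,r)$ into single sums in $m$ (yielding exactly the same two simplified expressions), then match against eq.~\eqref{eq:p_N_gt_r} via Pascal's rule, with the $g$-dependence absorbed into $\nu$. The only cosmetic difference is that you evaluate the inner sum over $u$ by trinomial revision plus the binomial theorem, whereas the paper differentiates $(1+\lambda x)^N$ $s$ times --- these are the same identity.
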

\begin{proof}
We first process the expression for $X(k,1,n,r)$ by interchanging the order of the sums.
\begin{equation}
\label{eq:Xk1nr_interchange}
\begin{split}
X(k,1,n,r) &= \sum_{u=r-1}^{n-1-kr}q^{u+1}p^{n-u-1} \biggl[\sum_{s=r-1}^u  (-1)^{s-r+1} \binom{u}{s} \binom {s}{r-1}\binom{n-1-(s+1)k}{u} \biggr] 
\\
&= qp^{n-1} \sum_{s} (-1)^{s-r+1}\binom{s}{r-1} \biggl[ \sum_{u=r-1}^{n-1-kr} \binom{u}{s} \binom{n-1-(s+1)k}{u} (q/p)^u \biggr] \,.
\end{split}
\end{equation}
Define $x = q/p$ and note that 
\begin{equation}
\sum_{u=0}^{n-1-(s+1)k} \binom{n-1-(s+1)k}{u} (\lambda x)^u = (1 + \lambda x)^{n-1-(s+1)k} \,.
\end{equation}
Differentiate $s$ times with respect to $\lambda$ and set $\lambda=1$ and recall $x=q/p$.
\begin{equation}
\begin{split}
\sum_{u=0}^{n-1-(s+1)k} \binom{u}{s} \binom{n-1-(s+1)k}{u} x^u &= x^s (1 + x)^{n-1-(s+1)k -s} \binom{n-1-(s+1)k}{s} 
\\
&= q^s p^{-n+1+(s+1)k} (p + q)^{n-1-(s+1)k -s} \binom{n-1-(s+1)k}{s}
\\
&= q^s p^{-n+1+(s+1)k} \binom{n-1-(s+1)k}{s} \,.
\end{split} 
\end{equation}
Return to eq.~\eqref{eq:Xk1nr_interchange} (set $m=s+1$ in the last line)
\begin{equation}
\label{eq:Xk1nr_simplified}
\begin{split}
X(k,1,n,r) &= qp^{n-1} \sum_{s} (-1)^{s-r+1}\binom{s}{r-1} q^s p^{-n+1+(s+1)k} \binom{n-1-(s+1)k}{s}
\\
&= qp^k \sum_{s=r-1}^{\lfloor(n-k-1)/(k+1)\rfloor} (-1)^{s-r+1}q^s p^{sk} \binom{s}{r-1} \binom{n-1-(s+1)k}{s} 
\\
&= \sum_{m=r}^{\lfloor n/(k+1)\rfloor} (-1)^{m-r}q^m p^{mk} \binom{m-1}{r-1} \binom{n-1-mk}{m-1} \,.
\end{split} 
\end{equation}
Next we process the expression for $Y(k,n,r)$ by interchanging the order of the sums.
\begin{equation}
\label{eq:Yknr_interchange}
\begin{split}
Y(k,n,r) &= \sum_u \sum_{t \ge r} (-1)^{t-r} \binom{t-1}{r-1} \binom{u}{t} \binom{n-tk}{u} p^{n-u}q^u 
\\
&= p^n \sum_{t \ge r} (-1)^{t-r}\binom{t-1}{r-1} \biggl[ \sum_{u=t}^{n-kt} \binom{u}{t} \binom{n-kt}{u} (q/p)^u \biggr] \,.
\end{split}
\end{equation}
Again define $x = q/p$ and note that 
\begin{equation}
\sum_{u=0}^{n-kt} \binom{n-kt}{u} (\lambda x)^u = (1 + \lambda x)^{n-kt} \,.
\end{equation}
Differentiate $t$ times with respect to $\lambda$ and set $\lambda=1$ and recall $x=q/p$.
\begin{equation}
\begin{split}
\sum_{u=0}^{n-kt} \binom{u}{t} \binom{n-kt}{u} x^u &= x^t (1 + x)^{n-kt -t} \binom{n-kt}{t} 
\\
&= q^t p^{-n+kt} (p + q)^{n-kt -t} \binom{n-kt}{t}
\\
&= q^t p^{-n+kt} \binom{n-kt}{t} \,.
\end{split} 
\end{equation}
Return to eq.~\eqref{eq:Yknr_interchange} (replace $t$ by $m$ in the last line).
\begin{equation}
\label{eq:Yknr_simplified}
\begin{split}
Y(k,n,r) &= p^n \sum_{t \ge r} (-1)^{t-r}\binom{t-1}{r-1} q^t p^{-n+kt} \binom{n-kt}{t} 
\\
&= \sum_{t=r}^{\lfloor n/(k+1)\rfloor} (-1)^{t-r}q^t p^{kt} \binom{t-1}{r-1} \binom{n-kt}{t} 
\\
&= \sum_{m=r}^{\lfloor n/(k+1)\rfloor} (-1)^{m-r}q^m p^{mk} \binom{m-1}{r-1} \binom{n-mk}{m} \,.
\end{split} 
\end{equation}
Then we obtain the following expression for $P(N_s \ge r)$.
Recall $\nu=n-(r-1)(g-1)$. Set the upper limits equal in both sums.
(This introduces extraneous terms in the sum for $Y(k,n-(r-1)(g-1),r)$, but the binomial coefficients equal zero because the numerator is less than the denominator.)
\begin{equation}
\begin{split}
P(N_s \ge r) &= \frac{1}{q} X(k,1,n+1-(r-1)(g-1),r) +Y(k,n-(r-1)(g-1),r)
\\
&= \frac1q \sum_{m=r}^{\lfloor (\nu+1)/(k+1)\rfloor} (-1)^{m-r}q^m p^{mk} \binom{m-1}{r-1} \binom{\nu-mk}{m-1} 
\\
&\qquad +\sum_{m=r}^{\lfloor \nu/(k+1)\rfloor} (-1)^{m-r}q^m p^{mk} \binom{m-1}{r-1} \binom{\nu-mk}{m} 
\\
&= \sum_{m=r}^{\lfloor (\nu+1)/(k+1)\rfloor} (-1)^{m-r}q^{m-1} p^{mk} \binom{m-1}{r-1} \biggl[ \binom{\nu-mk}{m-1} +q \binom{\nu-mk}{m} \biggr] 
\\
&= \sum_{m=r}^{\lfloor (\nu+1)/(k+1)\rfloor} (-1)^{m-r}q^{m-1} p^{mk} \binom{m-1}{r-1} \biggl[ \binom{\nu-mk+1}{m} +(q-1) \binom{\nu-mk}{m} \biggr]
\\
&= \sum_{m=r}^{\lfloor (\nu+1)/(k+1)\rfloor} (-1)^{m-r}q^{m-1} p^{mk} \binom{m-1}{r-1} \biggl[ \binom{\nu-mk+1}{m} -p \binom{\nu-mk}{m} \biggr] \,.
\end{split}
\end{equation}
This equals eq.~\eqref{eq:p_N_gt_r}.
\end{proof}

\setcounter{equation}{0}
\section{Negative binomial distribution}\label{sec:negbinom}
We briefly discuss the negative binomial distribution $\textrm{NB}_{\rm II}(k,g,r)$.
The number of success runs $r$ is fixed and the number of trials $n$ is variable.
By definition, the set of trials ends with a success run of length $k$.
It is preceded by $r-1$ success runs, hence there are $r-1$ gaps between the success runs, each of which contains one failure followed by $g-1$ arbitrary outcomes.
Hence there are totally $(r-1)(g-1)$ arbitrary outcomes.
Observe that $\textrm{NB}_{\rm II}(k,g,r)$ has the same probability distribution as $\textrm{NB}_{\rm II}(k,1,r) + (r-1)(g-1)$.
Omitting unnecessary arguments, the mean and variance are therefore given by 
\begin{subequations}
\begin{align}
\mu(g) &= \mu(1) + (r-1)(g-1) \,.
\\
\sigma^2(g) &= \sigma^2(1) \,.
\end{align}
\end{subequations}
For $g=1$, Balakrishnan and Koutras (\cite{BalakrishnanKoutras}, eq.~(4.19)) said the mean and variance are
\begin{subequations}
\begin{align}
\label{eq:negbinom_mean_BK}
\mu &= \frac{r-p^k}{qp^k} \,,
\\
\label{eq:negbinom_var_BK}
\sigma^2 &= \frac{r[1-(2k+1)qp^k -p^{2k+1}]}{(qp^k)^2} +\frac{(r-1)p}{q^2} \,.
\end{align}
\end{subequations}
\begin{remark}
All the higher moments (centered on the mean) are the same as for $g=1$.
\end{remark}
The probability mass function for $g=1$ was derived by Muselli (\cite{Muselli}, Theorem 4).
For $g>1$, we make the transcription $n \gets \nu$, where recall $\nu=n-(r-1)(g-1)$.
Then the probability mass function $\tilde{p}_{n,r}$ (omitting mention of $k$ and $g$) is as follows.
(We affix the tilde to avoid confusion with the pmf in Sec.~\ref{sec:pmf1}.)
\begin{equation}
\label{eq:Muselli_thm4}
\tilde{p}_{n,r} = \sum_{m=r}^{\left\lfloor\frac{\nu+1}{k+1}\right\rfloor} (-1)^{m-r}\binom{m-1}{r-1} p^{mk}q^{m-1}
\biggl[\binom{\nu-mk}{m-1} -p\binom{\nu-mk-1}{m-1}\biggr] \,.
\end{equation}
We take the opportunity here to present expressions for all the factorial moments for $g=1$.
Balakrishnan and Koutras presented an expression for the pgf.
First they defined (\cite{BalakrishnanKoutras}, eq.~(4.16)) $A(z) = (1-p^kz^k)/(1-pz)$.
Then the pgf is (\cite{BalakrishnanKoutras}, unnumbered after eq.~(4.18)) 
\begin{equation}
\label{eq:BK_pgf}
\begin{split}
G(z) &= \frac{p^{rk}q^{r-1}z^{rk+r-1}}{(1-qzA(z))^r(1-pz)^{r-1}}
\\
&= \frac{p^{rk}q^{r-1}z^{rk+r-1}(1-pz)}{(1-pz-qz(1-p^kz^k))^r}
\\
&= \frac{p^{rk}q^{r-1}z^{rk+r-1}(1-pz)}{(1-z+qp^kz^{k+1})^r} \,.
\end{split}
\end{equation}
The $s^{th}$ factorial moment is given by $\tilde{F}_{(s)} = [d^sG(z)/dz^s]_{z=1}$.
(We affix the tilde to avoid confusion with the factorial moments in Sec.~\ref{sec:facmom}.)
Omitting the details of the algebra, the result is
\begin{equation}
\label{eq:NB_facmom}
\tilde{F}_{(s)} = \frac{s!}{q^{s+1}p^{ks}}\sum_{j=0}^s (-1)^jq^jp^{jk}\binom{s-j+r-1}{r-1}\biggl[\binom{(k+1)s-jk}{j} -p\binom{(k+1)s-jk-1}{j} \biggr] \,.
\end{equation}
Note the following.
\begin{enumerate}
\item
Observe that because of the numerator factor $1-pz$ in the pgf in eq.~\eqref{eq:BK_pgf}, 
the factorial moment $\tilde{F}_{(s)}$ also has the structure ``$f(n)-pf(n-1)$'' where in this case ``$n$'' is $(k+1)s$.
\item
The case $r=1$ is the geometric distribution of order $k$.
For this case, Balakrishnan and Koutras (\cite{BalakrishnanKoutras}, unnumbered before eq.~(2.18)) gave a recurrence for the factorial moments but not an explicit solution.
\item
The factorial moments $\tilde{F}_{(s)}(g)$ for $g>1$ are given by ($c=(r-1)(g-1)$ and $c_{(j)}$ is the falling factorial).
\begin{equation}
\tilde{F}_{(s)}(g) = \sum_{j=0}^{\min(s,(r-1)(g-1))} \binom{s}{j} c_{(j)}\,\tilde{F}_{(s-j)}(1) \,.
\end{equation}
\end{enumerate}

\setcounter{equation}{0}
\section{Distribution of longest success run II}\label{sec:longestrun2}
\subsection{General remarks}
We present additional results for the distribution of the longest success run.
Recall that in Sec.~\ref{sec:longestrun1} we computed $P(L_n \le t)$, where $L$ is the length of the longest run, for fixed $(k,g,n)$.
Here we focus attention on $P(L_n \ge t)$.
We derive a recurrence for $P(L_n \ge t)$ and the associated generating function.
Using the generating function, we derive a combinatorial sum for $P(L_n \ge t)$ for $g\ge1$.
We show that the generating function and combinatorial sum simplify if $g=1$.
We also derive the mean $\mathbb{E}[L_n]$ and variance $\textrm{Var}[L_n]$ and factorial moments $\mathbb{E}[L_n(L_n-1)]$, etc.,  for $g\ge1$.

\subsection{Recurrence \&\ generating function for longest run}
Kopocinsky treated the case $g=1$ and derived a recurrence for $P_n(L_n \ge t)$
(\cite{Kopocinsky} Theorem 1) and the associated generating function
(\cite{Kopocinsky} Theorem 3).
We treat $g\ge1$ below.
Fix $k\ge1$ and $g\ge1$. Let $P_n(L_n \ge t)$ be the probability that the longest $k$-run with gap $g$ is at least $t$, where $t\ge k$.
We present a recurrence for $f(n,t) = P_n(L_n \ge t)$ for fixed $(k,g,t)$.
\begin{proposition}\label{prop_SJD_rec}
For $n>t$, 
\begin{equation}
\label{eq:rec_fnt}
f(n,t) = q\sum_{i=0}^{k-1} p^if(n-i-1,t) +q\sum_{i=k}^{t-1} p^if(n-i-g,t) +p^t \,,
\end{equation}
with the initial conditions $f(t,t)=p^t$ and $f(n,t)=0$ for $-\infty < n < t$.
\end{proposition}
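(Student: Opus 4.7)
The plan is to prove the recurrence by a first-step decomposition on the position of the first failure. I would partition the sample space into two exhaustive, mutually exclusive families: (a) the first $t$ trials are all successes (probability $p^t$), which forces $L_n \ge t$ automatically and accounts for the final term; and (b) the first failure occurs at position $i+1$ for some $0 \le i \le t-1$ (probability $p^i q$), so that the initial stretch of successes has length exactly $i$.

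Within family (b), I would distinguish two subcases according to whether those $i$ leading successes constitute a formal success run. If $0 \le i \le k-1$, no run is formed and there is no prior run from which a subsequent run must be separated; positions $i+2,\ldots,n$ behave as a fresh instance of the process on $n-i-1$ trials, giving contribution $qp^i f(n-i-1,t)$. If $k \le i \le t-1$, a success run of length $i<t$ is formed, terminating at the failure at position $i+1$; by the gap rule the next run (if any) must begin at position $i+g+1$ or later. Because the initial run has length less than $t$, the event $\{L_n \ge t\}$ is equivalent to the longest run among trials $i+g+1,\ldots,n$ being at least $t$, and these form a fresh instance on $n-i-g$ trials. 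The $g-1$ arbitrary buffer trials at positions $i+2,\ldots,i+g$ are marginalized out (their joint probabilities sum to $1$), yielding contribution $qp^i f(n-i-g,t)$.

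Summing the contributions over $i$ and adding the $p^t$ term reproduces the recurrence \eqref{eq:rec_fnt} exactly. The stated initial conditions handle the edge behaviour: $f(t,t)=p^t$ because the only realization attaining $L_t \ge t$ on $t$ trials is the all-success one, and $f(n,t)=0$ for $n<t$ because a run of length $t$ requires at least $t$ trials. These boundary values also keep the recurrence well-defined when $n-i-1<t$ or $n-i-g<0$, which can arise for small $n$, since the corresponding $f$ values are then $0$.

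The main obstacle will be justifying carefully that the $g-1$ arbitrary buffer trials truly have no effect on $\{L_n \ge t\}$: they cannot initiate a new success run (prohibited by the gap rule, which requires a new run to begin after position $i+g$), and they cannot extend the just-terminated run (severed by the failure at position $i+1$). This careful accounting of the buffer is what forces the shift $n-i-g$ (rather than $n-i-1$) to appear in the second sum, and it is the essential difference between this derivation and Kopocinsky's treatment of the $g=1$ case.
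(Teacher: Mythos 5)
Your proposal is correct and follows essentially the same route as the paper: a first-step decomposition conditioning on the length $i$ of the initial success stretch, splitting into $i<k$ (no run formed, recurse on $n-i-1$ trials), $k\le i\le t-1$ (run formed, gap of $g$ enforced, recurse on $n-i-g$ trials), and the all-success prefix contributing $p^t$. Your additional care in marginalizing the $g-1$ arbitrary buffer trials simply makes explicit a step the paper's proof leaves implicit.
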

\begin{proof}
  We condition on the first $t$ trials.
\vskip 0.1in\noindent  
\textbf{Case I:} Suppose that $0 \le i < k$, the first $i$ trials are $S$ and the $(i+1)^{th}$ trial is $F$.
Since a success has length at least $k$ it follows that the conditional probability that $L_n \ge t$ is just $f(n-i-1,t)$.

\vskip 0.1in\noindent  
\textbf{Case II:} Suppose that $k \le i \le t-1$, the first $i$ trials are $S$ and the $(i+1)^{th}$ trial is $F$.
In this case a success run has occurred and therefore a gap of length $g$ is required after this success run.
Hence the conditional probability that $L_n \ge t$ is $f(n-i-g,t)$.

\vskip 0.1in\noindent  
\textbf{Case III:} Suppose the first $t$ trials are $S$. Then, clearly, the conditional probability that $L_n \ge t$ is $1$.

\vskip 0.1in\noindent  
Hence the result follows.
\end{proof}
Next we derive the generating function $F(z,t) = \sum_{n=0}^\infty f(n,t)z^n$.
First we recall standard formalism for the theory of an inhomogeneous linear recurrence with constant coefficients.
Suppose that
\begin{equation}
\label{eq:inhom_rec_const_coeff}
f(n) = a_1 f(n-1) +\dots +a_tf(n-t) + c \,.
\end{equation}
All the coefficients $a_1,\dots,a_t$ and $c$ are constants.
Define a generating function 
$G(z) = \sum_{n=0}^\infty z^n f(n)$.
We suppose $f(n)=0$ for $-\infty < n<t$.
Then from eq.~\eqref{eq:inhom_rec_const_coeff} we see that $f(t) = c$.
Standard manipulations yield
\begin{equation}
\begin{split}
G(z) &= (a_1z +\dots +a_tz^t) G(z) + \frac{cz^t}{1-z} 
\\
\Rightarrow \quad 
G(z) &= \frac{1}{1-z} \frac{cz^t}{1-a_1z-\dots a_tz^t} \,.
\end{split}
\end{equation}
For our particular application, the generating function is $F(z,t)$ and the coefficients are given in eq.~\eqref{eq:rec_fnt}.
Then
\begin{equation}
\label{eq:genfcn_Fzt}
\begin{split}
F(z,t) &= \frac{1}{1-z} \frac{p^tz^t}{1 -q(z +pz^2 +\dots +p^{k-1}z^k) -q(p^kz^{k+g} +\dots +p^{t-1}z^{t-1+g})}
\\
&= \frac{1}{1-z} \frac{p^tz^t}{1 -qz(1-p^kz^k)/(1-pz) -qp^kz^{k+g}(1-p^{t-k}z^{t-k})/(1-pz)}
\\
&= \frac{1}{1-z} \frac{(1-pz)p^tz^t}{1-pz -qz(1-p^kz^k) -qp^kz^{k+g}(1-p^{t-k}z^{t-k})}
\\
&= \frac{1-pz}{1-z} \frac{p^tz^t}{1-z +qp^k(z^{k+1} -z^{k+g}) +qp^tz^{t+g}} \,.
\end{split}
\end{equation}
\begin{remark}
For $g=1$, $F(z,t)$ simplifies
\begin{equation}
\label{eq:genfcn_g1}
F(z,t,g=1) = \frac{1-pz}{1-z}\,\frac{p^tz^t}{1-z +qp^tz^{t+1}} \,.
\end{equation}  
This equals Kopocinsky's expression (\cite{Kopocinsky} Theorem 3), with suitable changes of notation.
\end{remark}

\subsection{Combinatorial sum for distribution of longest run}
The factor $(1-pz)$ in eq.~\eqref{eq:genfcn_Fzt} means that $F(z,t) = F_1(z,t) -pzF_1(z,t)$.
Hence it suffices to expand $F_1(z,t)$ in powers of $z$
\begin{equation}
\begin{split}
F_1(z,t) &= \frac{1}{1-z}\,\frac{p^tz^t}{1-z +qp^k(z^{k+1}-z^{k+g}) +qp^tz^{t+g}}
\\
&= \frac{p^tz^t}{(1-z)^2}\,\frac{1}{1 +q(p^k(z^{k+1}-z^{k+g}) +p^tz^{t+g})/(1-z)}
\\
&= p^tz^t\sum_{r=0}^\infty (-1)^r q^r \frac{(p^k(z^{k+1}-z^{k+g}) +p^tz^{t+g})^r}{(1-z)^{r+2}}
\\
&= p^tz^t\sum_{r=0}^\infty (-1)^r q^r \biggl(\sum_{j_1+j_2+j_3=r} (-1)^{j_2} \binom{r}{j_1,j_2,j_3} p^{(j_1+j_2)k+j_3t} z^{j_1(k+1) +j_2(k+g) +j_3(t+g)} \biggr)\;\times
\\
&\qquad\qquad\qquad \biggl(\sum_{s=0}^\infty \binom{s+r+1}{r+1} z^s\biggr)
\\
&= p^tz^t\sum_{r=0}^\infty (-1)^r q^r \biggl(\sum_{j_1+j_2+j_3=r} (-1)^{j_2} \binom{r}{j_1,j_2,j_3} p^{rk+j_3(t-k)} z^{j_1(k+1) +j_2(k+g) +j_3(t+g)} \biggr)\;\times
\\
&\qquad\qquad\qquad \biggl(\sum_{s=0}^\infty \binom{s+r+1}{r+1} z^s\biggr)
\end{split}
\end{equation}
The coefficient of $z^n$ is given by tuples $(j_1,j_2,j_3,s)$ such that 
$n = s +t +j_1(k+1)+j_2(k+g)+j_3(t+g)$, hence
$s = n -t -j_1(k+1)-j_2(k+g)-j_3(t+g)$.
Denote the coefficient of $z^n$ by $\zeta(n)$. Then
\begin{equation}
\begin{split}
  \zeta(n) &= \sum_{r=0}^\infty (-1)^r q^rp^{t+kr} \;\times
  \\
  &\qquad \sum_{j_1+j_2+j_3=r} (-1)^{j_2} p^{j_3(t-k)} \binom{r}{j_1,j_2,j_3} \binom{n -t -j_1(k+1)-j_2(k+g)-j_3(t+g) +r+1}{r+1} \,.
\end{split}
\end{equation}
The sum over $r$ terminates when $n -t -j_1(k+1)-j_2(k+g)-j_3(t+g) < 0$.
Then
$f(n,t) = \zeta(n) - p\zeta(n-1)$.

We can write the sums more concisely using only $j_1$, $j_2$ and $j_3$ as follows.
\emph{Define} $r=j_1+j_2+j_3$ and recall $s = n -t -j_1(k+1)-j_2(k+g)-j_3(t+g)$. Then
\begin{equation}
\label{eq:fnt_combin}
\begin{split}
  f(n,t) &= \sum_{j_1=0,\,j_2=0,\,j_3=0}^{s \ge 0}
  (-1)^{j_1+j_3} q^r p^{(j_1+j_2)k+(j_3+1)t} \binom{r}{j_1,j_2,j_3} \biggl[\binom{s +r+1}{r+1} -p\binom{s +r}{r+1} \biggr] \,.
\end{split}
\end{equation}

The sum simplifies for $g=1$.
We employ eq.~\eqref{eq:genfcn_g1}.
Again this has the form $F(z,t) = F_1 - pzF_1(z)$, hence we process only $F_1$. Then
\begin{equation}
\begin{split}
F_1(z,t) &= \frac{1}{1-z}\,\frac{p^tz^t}{1-z +qp^tz^{t+1}}
\\
&= \frac{p^tz^t}{(1-z)^2}\,\frac{1}{1 +qp^tz^{t+1})/(1-z)}
\\
&= p^tz^t\sum_{r=0}^\infty (-1)^r q^r p^{rt} \frac{z^{r(t+1)}}{(1-z)^{r+2}}
\\
&= p^tz^t\sum_{r=0}^\infty (-1)^r q^rp^{rt}z^{r(t+1)} \sum_{s=0}^\infty \binom{s+r+1}{r+1} z^s \,.
\end{split}
\end{equation}
The coefficient of $z^n$ is given by tuples $(r,s)$ such that $n = s +t +r(t+1)$.
Hence $s = n -t -r(t+1)$. Because $s\ge0$, we must have $r \le \lfloor(n-t)/(t+1)\rfloor$.
Denote the coefficient of $z^n$ by $\zeta_1(n)$ (for ``$g=1$''). Then
\begin{equation}
\begin{split}
  \zeta_1(n) &= \sum_{r=0}^{\lfloor(n-t)/(t+1)\rfloor} (-1)^r q^rp^{(r+1)t} \binom{n -t -r(t+1)+r+1}{r+1} 
  \\
  &= \sum_{r=0}^{\lfloor(n-t)/(t+1)\rfloor} (-1)^r q^rp^{(r+1)t} \binom{n -(r+1)t +1}{r+1} \,.
\end{split}
\end{equation}
Then $f(n,t) = \zeta_1(n) - p\zeta_1(n-1)$, whence
\begin{equation}
f(n,t,g=1) = \sum_{r=0}^{\lfloor(n-t)/(t+1)\rfloor} (-1)^r q^rp^{(r+1)t} \biggl[\binom{n -(r+1)t +1}{r+1} -p\binom{n -(r+1)t}{r+1}\biggr] \,.
\end{equation}  
Set $m=r+1$ and the final expression is
\begin{equation}
\label{eq:fnt_combin_g1}
f(n,t,g=1) = \sum_{m=1}^{\lfloor(n+1)/(t+1)\rfloor} (-1)^{m-1} q^{m-1}p^{mt} \biggl[\binom{n -mt +1}{m} -p\binom{n -mt}{m}\biggr] \,.
\end{equation}  
This is identical to de Moivre's result (see eq.~\eqref{eq:P_L_ge_k}, with minor changes of notation).

\subsection{Mean, variance and factorial moments of longest run}\label{sec:mean_var_facmom_longestrun}
Even for $g=1$, there are no published formulas for the mean and variance of the longest success run, i.e.~$\mathbb{E}[L_n]$ and $\textrm{Var}[L_n]$ for fixed $(k,g,n)$.
However, Schilling published an approximate estimate for $\mathbb{E}[L_n]$ for $k=g=1$ (\cite{Schilling} eq.~(1))
\begin{equation}
\label{eq:Schilling_mean_est}
\mathbb{E}[L_n] \simeq \frac{\ln(nq)}{\ln(1/p)} \,.
\end{equation}  
See also additional results by Gordon, Schilling and Waterman \cite{GordonSchillingWaterman}.

\begin{proposition}
For fixed $k\ge1$, $g\ge1$ and $p\in(0,1)$, 
\begin{equation}
\mathbb{E}[L_n] \asymp \frac{\ln{n}}{\ln(1/p)} \,.
\end{equation}
\end{proposition}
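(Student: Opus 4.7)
The plan is to prove both $\mathbb{E}[L_n]\le \ln n/\ln(1/p) + O(1)$ and, for every $c<1$, $\mathbb{E}[L_n]\ge (c-o(1))\ln n/\ln(1/p)$; together these give the asymptotic equivalence $\mathbb{E}[L_n]\sim \ln n/\ln(1/p)$, which implies the claimed $\asymp$.

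For the upper bound I would dominate $L_n$ by $L_n^{\ast}$, the length of the longest run of consecutive successes with the gap structure ignored: every official success run of length $\ell$ sits inside a maximal $S$-streak of length at least $\ell$, so $L_n \le L_n^{\ast}$. A union bound over the at most $n$ starting positions of a streak of length $t$ gives $P(L_n^{\ast}\ge t)\le np^t$. Writing $\mathbb{E}[L_n]\le \mathbb{E}[L_n^{\ast}] = \sum_{t\ge 1} P(L_n^{\ast}\ge t)$, splitting at $T_0=\lceil \ln n/\ln(1/p)\rceil$ (using $P\le 1$ below the threshold and $P\le np^t$ above), and summing the geometric tail then yields $\mathbb{E}[L_n]\le T_0 + p/(1-p)$.

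For the lower bound, fix $c\in(0,1)$, let $T=\lfloor c\ln n/\ln(1/p)\rfloor$, and partition the trials into $m=\lfloor n/(T+g+1)\rfloor$ disjoint windows of length $T+g+1$. In the $i$-th window, starting at position $s_i$, let $E_i$ be the event that positions $s_i,\dots,s_i+g-1$ are all $F$, positions $s_i+g,\dots,s_i+g+T-1$ are all $S$, and position $s_i+g+T$ is $F$. The events $E_i$ are independent with $P(E_i)=q^{g+1}p^T$. The key claim is that $E_i$ forces $L_n\ge T$: no official success run can terminate inside the $g$ forced failures beginning the window, so every earlier official run must end at some position $\le s_i-1$ and every earlier buffer must therefore finish by position $s_i+g-1<s_i+g$; consequently the inductive run-selection procedure arrives at position $s_i+g$ with no active buffer, and the forced $S$-streak at $[s_i+g,\,s_i+g+T-1]$, which is flanked by $F$'s on both sides, is selected as an official success run of length exactly $T$. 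Independence now yields $P(L_n<T)\le \exp(-m q^{g+1}p^T)$, and since $mp^T = \Theta(n^{1-c}/\ln n)\to\infty$ for $c<1$, this tends to $0$. Hence $\mathbb{E}[L_n]\ge T\,P(L_n\ge T) = (c-o(1))\ln n/\ln(1/p)$, and taking $c\uparrow 1$ gives $\liminf \mathbb{E}[L_n]/(\ln n/\ln(1/p))\ge 1$.

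The main obstacle is the \emph{official-run} verification above: under the Type II counting scheme with gap $g$, a maximal $S$-streak of length $\ge k$ that begins inside the buffer of an earlier official run is absorbed and uncounted, so one cannot simply declare every long $S$-streak a success run. The $g$ leading failures built into $E_i$ are included precisely to preclude this pathology, forcing every preceding buffer to end strictly before the forced $T$-streak begins. Once this structural point is secured, the remaining ingredients—a union bound, a geometric-sum tail estimate, and the inequality $(1-x)^m\le e^{-mx}$—are entirely routine.
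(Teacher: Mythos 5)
Your proof is correct and follows essentially the same strategy as the paper: a lower bound from disjoint independent blocks each forcing an official run of length about $T$ (your explicit $F^gS^TF$ pattern plays the same role as the paper's all-$S$ blocks of length $m+g$, both neutralizing the possibility that a preceding buffer absorbs the streak), plus an upper bound by domination by the unconstrained longest $S$-streak. The only difference is cosmetic: you prove the upper bound directly via a union bound and geometric tail, where the paper simply cites the known $k=g=1$ case.
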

\begin{proof}
  Note that if $m\ge k$ then each group of $m+g$ consecutive $S$ trials contains a portion of a success run of length at least $m$.
  Hence, for $\lambda>1$, if $n \ge \lambda(m+g)p^{-(m+g)}$ then 
\begin{equation}
\begin{split}
  P(L_n \ge m) &\ge 1 - (1-p^{m+g})^{\lambda p^{-(m+g)}}
  \\
  &\ge 1 - e^{-\lambda} 
\end{split}
\end{equation}
and hence $\mathbb{E}[L_n] \ge m(1-e^{-\lambda})$.
Note that
$\ln(n) \ge \ln(\lambda(m+g)) +(m+g)\ln(1/p)$ and so we may assume
\begin{equation}
m \asymp \frac{\ln{n}}{\ln(1/p)} \,.
\end{equation}
Letting $\lambda\to\infty$, 
\begin{equation}
\liminf_{n\to\infty} \frac{\mathbb{E}[L_n]}{\ln(n)} \ge  \frac{1}{\ln(1/p)} \,.
\end{equation}
On the other hand, clearly $L_n$ for fixed $k$ and $g$ is dominated by $L_n$ for $k=g=1$. So
\begin{equation}
\limsup_{n\to\infty} \frac{\mathbb{E}[L_n]}{\ln(n)} \le  \frac{1}{\ln(1/p)} 
\end{equation}
follows from the known case of $k=g=1$.
Combining the upper and lower estimates gives
\begin{equation}
\mathbb{E}[L_n] \asymp \frac{\ln{n}}{\ln(1/p)} \,.
\end{equation}
This proves the result.
\end{proof}

We derive expressions for the mean, variance and factorial moments as follows, for fixed $(k,g,n)$.
Note that $P(L_n = t) = P(L_n \ge t) - P(L_n \ge t+1)$.
For the convenience of the reader, we note the following well-known expressions for the factorial moments
$F_{(r)} = \mathbb{E}[L_n(L_n-1)\dots(L_n-r+1)]$.
\begin{subequations}
\begin{align}
  F_{(0)} &= 1 \,, \\
  F_{(1)} &= \sum_{t=1}^n P(L_n \ge t) \,, \\
  F_{(2)} &= 2\sum_{t=2}^n (t-1)P(L_n \ge t) \,, \\
  F_{(3)} &= 3\sum_{t=3}^n (t-1)(t-2)P(L_n \ge t) \,, \\
  &\;\vdots \\
  F_{(r)} &= r\sum_{t=r}^n (t-1)(t-2)\dots(t-r+1)P(L_n \ge t) \nonumber\\
  &= r!\sum_{t=r}^n \binom{t-1}{r-1}P(L_n \ge t) \,.
\end{align}
\end{subequations}
There are exactly $n+1$ nonzero factorial moments, indexed by $r=0,1,\dots,n$.
If $r>n$ then $F_{(r)}=0$.
Hence the mean and variance are given by
\begin{subequations}
\label{eq:mean_var_longestrun}
\begin{align}
  \mathbb{E}[L_n] &= \sum_{t=1}^n P(L_n \ge t) \,,
  \\
  \textrm{Var}[L_n] &= \mathbb{E}[L_n(L_n-1)] +\mathbb{E}[L_n] -(\mathbb{E}[L_n])^2
  \nonumber \\
  &= \biggl(\sum_{t=1}^n (2t-1) P(L_n \ge t)\biggr) -\biggl(\sum_{t=1}^n P(L_n \ge t)\biggr)^2 \,.
\end{align}
\end{subequations}
Observe that $P(L_n = t)$ is a probability mass and $P(L_n \le t)$ is a cumulative probability mass and $P(L_n \ge t)$ is the complement cumulative probability mass.
The above expressions for $\mathbb{E}[L_n]$ and $\textrm{Var}[L_n]$ in terms of weighted sums over $P(L_n \ge t)$
are general formulas for any integer-valued discrete distribution with support $0,1,2,\dots$.
Our contribution is to provide an explicit expression for $P(L_n \ge t)$ for the case of the longest success run in Bernoulli trials.

\begin{remark}
For $g=1$, eq.~\eqref{eq:fnt_combin_g1} yields the following expression for the mean.
\begin{equation}
\mathbb{E}[L_n] = \sum_{t=1}^n \sum_{m=1}^{\lfloor(n+1)/(t+1)\rfloor} (-1)^{m-1} q^{m-1}p^{mt} \biggl[\binom{n -mt +1}{m} -p\binom{n -mt}{m}\biggr] \,.
\end{equation}  
\end{remark}
  
\begin{remark}
Recall that if $k \le n \le 2k$, only one success run of length at least $k$ is possible.
In that case, eq.~\ref{eq:P_L_ge_t_onerun} furnishes a simple expression for $P(L_n \ge t)$.
It is then possible to derive closed-form expressions for the mean and variance.
The details of the algebra are tedious but straightforward and are omitted.
The mean is given by
\begin{equation}
  \mathbb{E}[L_n] = p^k [n +(n-k)(k-1)q] \,.
\end{equation}
From eq.~\eqref{eq:mean_var_longestrun}, for the variance we require the sum 
\begin{equation}
\begin{split}
  S &= \sum_{t=1}^n (2t-1)P(L_n \ge t) 
  \\
  &= kp^k[ k +2(n-k)p +k(n-k)q ] +\frac{p}{q^2} \Bigl[(n-k)q p^k (1+p) -2 p (p^k - p^n) \Bigr] \,.
\end{split}
\end{equation}
The variance is obtained by subtraction
\begin{equation}
\begin{split}
  \textrm{Var}[L_n] &= \biggl(\sum_{t=1}^n (2t-1)P(L_n \ge t) \biggr) - (\mathbb{E}[L])^2
  \\
  &= S - p^{2k} [n +(n-k)(k-1)q]^2 \,.
\end{split}
\end{equation}
\end{remark}

\setcounter{equation}{0}
\section{Conclusion}\label{sec:conc}
This paper is an extension of the exposition by Dilworth and Mane in \cite{DM5}.
We presented two derivations for the probability mass function to attain $r$ success runs, for fixed $(k,g,n)$,
in Secs.~\ref{sec:pmf1} and \ref{sec:pmf2}, respectively.
Using the insights in this note, we revisited the results for the factorial moments derived in \cite{DM5}
and rewrote them in a more concise and illuminating manner.
Let $L$ denote the length of the longest success run.
We derived the distribution of the longest success run $P(L \le t)$.
We presented results for the mean, variance, probability mass function and factorial moments for
$\textrm{NB}_{\rm II}(k,g,r)$, the Type II negative binomial distribution of order $k$,
where the number of success runs $r$ is fixed and the number of trials $n$ is variable.
Next, we presented a recurrence and generating function for $P(L \ge t)$, 
and expressions for the mean, variance and factorial moments of $L$.

\section*{Funding}
S.~J.~Dilworth was supported by Simons Foundation Collaboration Grant 849142.


\end{document}